\numberwithin{equation}{section}
\newtheorem{theorem}{Theorem}[section]
\newtheorem{proposition}[theorem]{Proposition}
\newtheorem{definition}[theorem]{Definition}
\newtheorem{remark}[theorem]{Remark}
\newtheorem{lemma}[theorem]{Lemma}
\newtheorem{example}[theorem]{Example}
\newtheorem{corollary}[theorem]{Corollary}
\newtheorem{question}[theorem]{Question}
\newtheorem{theorem*}{Theorem}
\newcommand{\Tor}{\operatorname{Tor}}
\newcommand{\Mod}{\operatorname{Mod}}
\newcommand{\Hom}{\operatorname{Hom}}
\newcommand{\Ext}{\operatorname{Ext}}
\newcommand{\E}{\mathbb{E}}
\newcommand{\F}{\mathbb{F}}
\newcommand{\X}{\mathcal{X}}
\newcommand{\I}{\mathcal{I}}
\newcommand{\J}{\mathcal{J}}
\newcommand{\id}{\operatorname{id}}
\newcommand{\Ob}{\operatorname{Ob}}
\newcommand{\Mor}{\operatorname{Mor}}
\newcommand{\ra}{\rightarrow}
\newcommand{\s}{\mathfrak{s}}
\newcommand{\C}{\mathscr{C}}
\newcommand{\Ph}{\bf{Ph}}
\newcommand{\Coph}{\bf{Coph}}
\newcommand{\inj}{\bf{inj}}
\newcommand{\proj}{\bf{proj}}
\title{ \bf Phantom Ideals and Cotorsion Pairs in Extriangulated Categories
\thanks{2010 Mathematics Subject Classification: 18G25, 16E30, 18E40.}
\thanks{Keywords: Phantom ideals; Cotorsion pairs; Extriangulated categories;
(Co)phantom morphisms; Special precovering ideals; Special preenveloping ideals.}}
\author{Tiwei Zhao\thanks{E-mail address:  tiweizhao@hotmail.com}, \
Zhaoyong Huang\thanks{E-mail address: huangzy@nju.edu.cn} \\
{\it \footnotesize  Department of Mathematics, Nanjing University, Nanjing 210093, Jiangsu Province, P.R. China}}
\date{ }
\begin{document}

\baselineskip=16pt
\maketitle

\begin{abstract}
In this paper, we introduce and study relative phantom morphisms in extriangulated categories defined
by Nakaoka and Palu. Then using their properties, we show that if $(\C,\E,\s)$ is an extriangulated category with enough injective objects
and projective objects, then there exists a bijective correspondence between any two of the following classes:
(1) special precovering ideals of $\C$; (2) special preenveloping ideals of $\C$; (3) additive subfunctors of $\E$ having enough
special injective morphisms; and (4) additive subfunctors of $\E$ having enough special projective morphisms.
Moreover, we show that if $(\C,\E,\s)$ is an extriangulated category with enough injective objects
and projective morphisms, then there exists a bijective correspondence between the following two classes:
(1) all object-special precovering ideals of $\C$; (2) all additive subfunctors of $\E$ having enough special injective objects.
\end{abstract}

\pagestyle{myheadings}
\markboth{\rightline {\scriptsize   T. Zhao, Z. Huang}}
         {\leftline{\scriptsize  Phantom Ideals and Cotorsion Pairs in Extriangulated Categories}}


\section{Introduction} 

In algebra, geometry and topology, exact categories and triangulated categories are two fundamental structures.
The interest of exact categories is manifold,  and there is no need to argue that they are both useful and
important. Triangulated categories were introduced in the mid 1960¡¯s by Verdier \cite{Ve}
in his thesis. Having their origins in algebraic geometry and
algebraic topology, triangulated categories have also become indispensable
in many different areas of mathematics by now. As expected, exact categories and triangulated categories are not
independent of each other. A well-known fact is that triangulated categories which at the same time are
abelian must be semisimple. Also, there are a series of ways to produce triangulated
categories from abelian ones, such as, taking the stable categories of Frobenius exact categories, or taking the
homotopy categories or derived categories of complexes over abelian categories. On the other hand, because of the recent
development of the cluster theory, it becomes possible to produce abelian categories from triangulated ones, that is,
starting from a cluster category and taking a cluster tilting subcategory, one can get a suitable quotient category,
which turns out to be abelian \cite{KZ}. In addition, exact categories and triangulated categories possess same properties
in many homological invariants, for example in the aspect of  approximation theory \cite{AbNa,Liu,Na}.
Approximation theory is the main part of relative homological algebra and representation theory of algebras,
and its starting point is to approximate arbitrary objects by a class of
suitable subcategories. In this process, the notion of cotorsion pairs provides a fruitful context,
in particular, it is closely related to many important homological structures, for example, $t$-structure, co-$t$-structure,
cluster tilting subcategories, and so on. In general, to transfer the homological properties between exact categories and
triangulated categories, one needs  to specify to the case of stable categories of Frobenius exact categories, and then
lift (or descend) the associated definitions and statements, and finally adapt the proof so that it can apply to any exact
(or triangulated) categories. However, it is not easy to do it in general case, especially in the third step.
To overcome the difficulty, Nakaoka and Palu \cite{NP} introduced the notion of externally triangulated categories
(extriangulated categories for short) by a careful looking what is necessary in the definition of cotorsion pairs in
exact and triangulated cases. Under this notion, exact categories and extension-closed subcategories of triangulated
categories both are externally triangulated, and hence, in some levels,  it becomes easy to give  uniform statements
and proofs for the exact and triangulated settings \cite{NP,ZZ}.



In an abstract category, objects and morphisms  are two essential components;
and by a well-known embedding from a category to its morphism morphism, objects can be viewed as special morphisms.
In the classical approximation theory, we mainly concern the objects and the associated subcategories. However, in general case,
it seems that the morphisms and the associated ideals also should be concerned in the approximation theory.
From this point of view, Fu, Guil Asensio, Herzog and Torrecillas in \cite{FGHT} introduced the notion of
ideal cotorsion pairs and developed  the ideal approximation theory of exact categories.
Inside it, the phantom ideal plays an important role in the aspect
of providing a certain ideal cotorsion pair; and it has been investigated in
algebraic topology \cite{Mc}, stable homotopy categories of spectra \cite{AdW},
triangulated categories \cite{BM,Ne}, and stable categories of finite group rings
\cite{Ben,BenG99,BenG01}. In particular, Herzog generalized in \cite{He07}
the phantom morphism to the category of left $R$-modules of arbitrary associative
ring $R$ in the following way:
a morphism $f:M\ra N$ of left $R$-modules is called a \emph{phantom morphism} if the natural
transformation $\Tor_1^R(-,f):\Tor^R_1(-,M)\ra \Tor^R_1(-,N)$ is zero, or equivalently,
the pullback of any short exact sequence along $f$ is pure exact. Then he showed
that every module admits a phantom cover. As a generalization of the (classical) approximation theory
for subcategories, Fu et. al developed in \cite{FGHT} the approximation
theory of an exact category $\mathscr{A}$ for ideal cotorsion pairs.  A careful look reveals that
the essentially necessary matters in \cite{FGHT} are pullbacks and pushouts, that is, some special
operations of functors. So this inspires us to establish the approximation theory in
an additive category equipped with an additive bifunctor; in particular,
we consider it in extriangulated categories, which not only unifies the ideal approximation theory in
exact categories and triangulated categories, but also extends this theory to those categories
which are neither exact nor triangulated as much as possible.

This paper is organized as follows.

In Section 2, we give some terminology and some preliminary results.

In Section 3, we first introduce the notion of relative phantom morphisms in an additive category,
and then extend it to an extriangulated category. We study the relationship between relative phantom
morphisms and relative injective morphisms, and give a sufficient condition such that they form
a relative cotorsion pair.

In Section 4, we mainly discuss the role of phantom operations, and use it to  investigate the interplay
among special precovering ideals, special preenveloping ideals,  additive subfunctors having enough
special injective morphisms, and additive subfunctors having enough special projective morphisms.
We show that if $(\C,\E,\s)$ is an extriangulated category with enough injective objects and projective objects,
then we have the following bijective correspondences.
$$ \ \
\xymatrix@=1.5cm{
{\begin{tabular}{|c|}
  \hline
      \mbox{all special precovering ideals} \\ \mbox{of $\C$}\\
  \hline
\end{tabular}}\ar@<+4pt>[r]^{\!\!\!\!\!\!\!\!\!\!\!\!\!\!\!(-)^\star}\ar@<+4pt>[d]^{(-)^{\perp_\E}}&
\ar@<+4pt>[d]^G\ar@<+4pt>[l]^{\!\!\!\!\!\!\!\!\!\!\!\!\!\!\!\!\Ph(-)}\ar[ld]_{\!\!\!\!\!\!\!\!\!\!\!\!\!\!\!\!\!\!\!\!\!\!\!\!\!\!\!\!\!\!\!\!\!\!\!\!\!
\!\!\!\!\!\!\!\!\!\!\!\!\!\!\!\!\!\!\!\!\!\!\!\!\!\!\!\!\!\!\!\!\!\!\!\!\!\!\!\!\!\!\!(-)\mbox{-}\proj}{\begin{tabular}{|c|}
  \hline
        \mbox{all additive subfunctors of $\E$ having}\\\mbox{enough special injective morphisms} \\
  \hline
\end{tabular}}\\
{\begin{tabular}{|c|}
  \hline
      \mbox{all special preenveloping ideals} \\ \mbox{of $\C$}\\
  \hline
\end{tabular}}\ar@<+4pt>[r]^{\!\!\!\!\!\!\!\!\!\!\!\!\!\!\!(-)_\star}\ar@<+4pt>[u]^{^{\perp_\E}(-)}
&\ar@<+4pt>[u]^F\ar@<+4pt>[l]^{\!\!\!\!\!\!\!\!\!\!\!\!\!\!\!\!\Coph(-)}
\ar[lu]_{~~~~~~~~~~~~~~~~(-)\mbox{-}\inj}{\begin{tabular}{|c|}
  \hline
        \mbox{all additive subfunctors of $\E$ having}\\\mbox{enough special projective morphisms} \\
  \hline
\end{tabular}}}
$$
Here $F={(-)^\star}\circ{^{\perp_\E}(-)}\circ\Coph(-)$ and $G={(-)_\star}\circ{{(-)^{\perp_\E}}}\circ{\Ph(-)}$,
see Section 3 for the definitions of these functors.

In Section 5, we consider object-special precovering ideals, and show that  if $(\C,\E,\s)$ is an extriangulated category
with enough injective objects and projective morphisms, then we have the following bijective correspondence.
$$
\xymatrix@C=1.5cm{
{\begin{tabular}{|c|}
  \hline
      \mbox{all object-special precovering ideals} \\ \mbox{of $\C$}\\
  \hline
\end{tabular}}\ar@<+4pt>[r]^{\!\!\!\!\!\!\!\!\!\!(-)^\star}&\ar@<+4pt>[l]^{\!\!\!\!\!\!\Ph(-)}{\begin{tabular}{|c|}
  \hline
        \mbox{all additive subfunctors of $\E$ having}\\\mbox{enough special injective objects} \\
  \hline
\end{tabular}}}
$$

\section{Preliminaries}
Throughout this paper, $\C$ is an additive category and $\E:\C^{\operatorname{op}}\times\C\rightarrow \mathfrak{A}b$
is a biadditive functor, where $\mathfrak{A}b$ is the category of abelian groups.
\subsection{$\E$-extensions}
\begin{definition}{\rm
(\cite[Definition 2.1,2.5]{NP}) For any $A,C\in\C$, there is a corresponding abelian group $\E(C,A)$.
\begin{enumerate}
\item[(1)] An element $\delta\in\E(C,A)$ is called an \emph{$\E$-extension}. More formally, an $\E$-extension is a triple $(A,\delta,C)$.
\item[(2)]The zero element $0$ in $\E(C,A)$ is called the \emph{split $\E$-extension}.
\end{enumerate}}
\end{definition}

Let $a\in \C(A,A')$ and $c\in\C(C',C)$. Then we have the following commutative diagram
$$\xymatrix@C=0.5cm{
  \E(C,A) \ar[rr]^{\E(C,a)}\ar[d]^{\E(c,A)}\ar[rrd]^{\E(c,a)} && \E(C,A')\ar[d]^{\E(c,A')} \\
 \E(C',A)\ar[rr]^{\E(C',a)} &&\E(C',A') }
$$  in $\mathfrak{A}b$.
For an $\E$-extension $(A,\delta,C)$, we briefly write $a_\star\delta:=\E(C,a)(\delta)$ and
$c^\star\delta:=\E(c,A)(\delta)$. Then
$$\E(c,a)(\delta)=c^\star a_\star \delta= a_\star c^\star \delta.$$

\begin{definition}
{\rm (\cite[Definition 2.3]{NP})
Given two $\E$-extensions $(A,\delta,C)$ and $(A',\delta',C')$. A \emph{morphism} from $\delta$ to $\delta'$
is a pair $(a,c)$ of morphisms, where $a\in \C(A,A')$ and $c\in \C(C,C')$, such that $a_\star\delta=c^\star\delta$.
In this case, we denote it by $(a,c):\delta\rightarrow \delta'.$}
\end{definition}

Now let $A,C\in\C$. Two sequences of morphisms
$$
\xymatrix@C=0.5cm{
  A \ar[r]^x & B \ar[r]^y & C }
  \mbox{ and }
  \xymatrix@C=0.5cm{
  A \ar[r]^{x'} & B' \ar[r]^{y'} & C}
$$
are said to be \emph{equivalent} if there exists an isomorphism $b\in\C(B,B')$ such that the following diagram
$$
\xymatrix{
   A \ar[r]^x\ar@{=}[d] & B \ar[r]^y\ar[d]^b_\cong & C\ar@{=}[d]\\
    A \ar[r]^{x'} & B' \ar[r]^{y'} & C }
$$
commutes.
We denote by $[\xymatrix@C=0.5cm{
A \ar[r]^x & B \ar[r]^y & C }]$ the equivalence class of $\xymatrix@C=0.5cm{
A \ar[r]^x & B \ar[r]^y & C}$. In particular, we write $0:=[\xymatrix@C=0.5cm{
A \ar[r]^{\!\!\!\!\!\!\!{1\choose 0}} & A\oplus C \ar[r]^{~~~(0 \ 1)} & C }]$.

Note that, for any pair $\delta\in \E(C,A)$ and $\delta'\in \E(C',A')$, since $\E$ is biadditive,
there exists a natural isomorphism
$$\E(C\oplus C',A\oplus A')\cong \E(C,A)\oplus\E(C,A')\oplus\E(C',A)\oplus\E(C',A').$$
We define the symbol $\delta \oplus\delta'$ to be the element in $\E(C\oplus C',A\oplus A')$ corresponding to the element
$(\delta,0,0,\delta')$ in $\E(C,A)\oplus\E(C,A')\oplus\E(C',A)\oplus\E(C',A')$ through the above isomorphism.

\begin{definition}
{\rm (\cite[Definition 2.9]{NP})
Let $\s$ be a correspondence which associates an equivalence class $\s(\delta)=[\xymatrix@C=0.5cm{
A \ar[r]^x & B \ar[r]^y & C }]$ to each $\E$-extension $\delta\in\E(C,A)$. The $\s$ is called a
\emph{realization} of $\E$ provided that it satisfies the following condition.
\begin{enumerate}
\item[(R)]  Let $\delta\in\E(C,A)$ and $\delta'\in\E(C',A')$ be any pair of $\E$-extensions with
$$\s(\delta)=[\xymatrix@C=0.5cm{
A \ar[r]^x & B \ar[r]^y & C }]
\mbox{ and }
\s(\delta')=[ \xymatrix@C=0.5cm{
A' \ar[r]^{x'} & B' \ar[r]^{y'} & C' }].$$
Then for any morphism $(a,c):\delta\rightarrow\delta'$, there exists $b\in\C(B,B')$ such that the following diagram
$$\xymatrix{
A \ar[r]^x\ar[d]^a & B \ar[r]^y\ar[d]^b & C\ar[d]^c\\
A' \ar[r]^{x'} & B' \ar[r]^{y'} & C' }$$
commutes.
\end{enumerate}}
\end{definition}

Let $\s$ be a realization of $\E$. If $\s(\delta)=[\xymatrix@C=0.5cm{
A \ar[r]^x & B \ar[r]^y & C }]$ for some $\E$-extension $\delta\in\E(C,A)$, then we say that the sequence
$\xymatrix@C=0.5cm{A \ar[r]^x & B \ar[r]^y & C }$ \emph{realizes} $\delta$; and in the condition (R),
we say that the triple $(a,b,c)$ \emph{realizes} the morphism $(a,c)$.

\begin{remark}{\rm
Let $\s$ be a realization of $\E$, and let $\delta\in\E(C,A)$ be an $\E$-extension with
$\s(\delta)=[\xymatrix@C=0.5cm{A \ar[r]^x & B \ar[r]^y & C }]$.
\begin{enumerate}
\item[(1)] For any $a\in\C(A,A')$, since $a_\star\delta={\id_C}^\star a_\star\delta$, there exists a morphism
$(a,\id_C):\delta\rightarrow a_\star \delta$. Assume that
$$\s(a_\star \delta)=[ \xymatrix@C=0.5cm{
A' \ar[r]^{x'} & B' \ar[r]^{y'} & C }].$$
Then by the condition (R), there exists a commutative diagram
$$\xymatrix{
A \ar[r]^x\ar[d]^a & B \ar[r]^y\ar[d] & C\ar@{=}[d]\\
A' \ar[r]^{x'} & B' \ar[r]^{y'} & C.}$$
\item[(2)]  For each $c\in\C(C',C)$, since ${\id_A}_\star c^\star\delta=c^\star\delta$, there exists a morphism
$(\id_A,c):c^\star\delta\rightarrow \delta$. Assume that
$$\s(c^\star\delta)=[ \xymatrix@C=0.5cm{
A \ar[r]^{x''} & B'' \ar[r]^{y''} & C' }].$$
Then by the condition (R), there exists a commutative diagram
$$\xymatrix{
A \ar[r]^{x''}\ar@{=}[d] & B'' \ar[r]^{y''}\ar[d] & C'\ar[d]^c\\
A \ar[r]^{x} & B \ar[r]^{y} & C.}$$
\end{enumerate}}
\end{remark}

For any two equivalence classes $[\xymatrix@C=0.5cm{
  A \ar[r]^x & B \ar[r]^y & C }]$ and $[\xymatrix@C=0.5cm{
  A' \ar[r]^{x'} & B' \ar[r]^{y'} & C' }]$, we define
$$[\xymatrix@C=0.5cm{
  A \ar[r]^x & B \ar[r]^y & C }]\oplus[\xymatrix@C=0.5cm{
  A' \ar[r]^{x'} & B' \ar[r]^{y'} & C' }]:=[\xymatrix@C=0.5cm{
  A\oplus A' \ar[r]^{x\oplus x'} & B\oplus B' \ar[r]^{y\oplus y'} & C\oplus C' }].$$

\begin{definition}
{\rm (\cite[Definition 2.10]{NP})
A realization $\s$ of $\E$ is called \emph{additive} if it satisfies the following conditions.
\begin{enumerate}
\item[(1)] For any $A,C\in\C$, the split $\E$-extension $0\in\E(C,A)$ satisfies $\s(0)=0$.
\item[(2)] For any pair of $\E$-extensions $\delta\in\E(C,A)$ and $\delta'\in\E(C',A')$,
we have $\s(\delta\oplus\delta')=\s(\delta)\oplus\s(\delta')$.
\end{enumerate}}
\end{definition}

Let $\s$ be an additive realization of $\E$. By \cite[Remark 2.11]{NP}, we have that if the sequence
$\xymatrix@C=0.5cm{A \ar[r]^x & B \ar[r]^y & C }$ realizes $0$ in $\E(C,A)$, then $x$ is a section and $y$ is a retraction.

\subsection{Externally triangulated categories}

\begin{definition}{\rm
(\cite[Definition 2.12]{NP}) Let $\C$ be an additive category. We call the triple $(\C,\E,\s)$ an \emph{externally triangulated category}
(or \emph{extriangulated category} for short) if it satisfies the following conditions.
 \begin{enumerate}
\item[(ET1)] $\E:\C^{\operatorname{op}}\times\C\rightarrow \mathfrak{A}b$
is a biadditive functor.
\item[(ET2)] $\s$ is an additive realization of $\E$.
\item[(ET3)] Let $\delta\in\E(C,A)$ and $\delta'\in\E(C',A')$ be any pair of $\E$-extensions with
$$
\s(\delta)=[\xymatrix@C=0.5cm{
  A \ar[r]^x & B \ar[r]^y & C }]
  \mbox{ and }
 \s(\delta')=[ \xymatrix@C=0.5cm{
  A' \ar[r]^{x'} & B' \ar[r]^{y'} & C' }].
$$
For any commutative diagram
$$
\xymatrix{
   A \ar[r]^x\ar[d]^a & B \ar[r]^y\ar[d]^b & C\\
    A' \ar[r]^{x'} & B' \ar[r]^{y'} & C' }
$$
in $\C$, there exists a morphism $(a,c):\delta\ra \delta'$ which is realized by the triple $(a,b,c)$.
\item[${\rm (ET3)^{op}}$]  Let $\delta\in\E(C,A)$ and $\delta'\in\E(C',A')$ be any pair of $\E$-extensions with
$$
\s(\delta)=[\xymatrix@C=0.5cm{
  A \ar[r]^x & B \ar[r]^y & C }]
  \mbox{ and }
 \s(\delta')=[ \xymatrix@C=0.5cm{
  A' \ar[r]^{x'} & B' \ar[r]^{y'} & C' }].
$$
For any commutative diagram
$$
\xymatrix{
   A \ar[r]^x & B \ar[r]^y\ar[d]^b & C\ar[d]^c\\
    A' \ar[r]^{x'} & B' \ar[r]^{y'} & C' }
$$
in $\C$, there exists a morphism $(a,c):\delta\ra \delta'$ which is realized by the triple $(a,b,c)$.
\item[(ET4)]   Let $\delta\in\E(C,A)$ and $\rho\in\E(F,B)$ be any pair of $\E$-extensions with
$$
\s(\delta)=[\xymatrix@C=0.5cm{
  A \ar[r]^x & B \ar[r]^y & C }]
  \mbox{ and }
 \s(\rho)=[ \xymatrix@C=0.5cm{
  B \ar[r]^{u} & D \ar[r]^{v} & F }].
$$
Then there exist an object $E\in\C$, an $\E$-extension $\xi$ with $\s(\xi)=[\xymatrix@C=0.5cm{
  A \ar[r]^z & D \ar[r]^w & E }]$, and a commutative diagram
$$
\xymatrix{
   A \ar[r]^x\ar@{=}[d] & B \ar[r]^y\ar[d]^u & C\ar[d]^s\\
    A \ar[r]^{z} & D \ar[r]^{w}\ar[d]^v &E\ar[d]^t\\
    &F\ar@{=}[r] &F }
$$
in $\C$, which satisfy the following compatibilities.
 \begin{enumerate}
\item[(i)] $\s(y_\star\rho)=[\xymatrix@C=0.5cm{
  C \ar[r]^s & E \ar[r]^t & F }].$
\item[(ii)] $s^\star\xi=\delta$.
\item[(iii)] $x_\star\xi=t^\star\rho$.
\end{enumerate}
\item[${\rm (ET4)^{op}}$]   Let $\eta\in\E(E,A)$ and $\xi\in\E(F,C)$ be any pair of $\E$-extensions with
$$
\s(\eta)=[\xymatrix@C=0.5cm{
  A \ar[r]^z & D \ar[r]^w & E }]
  \mbox{ and }
 \s(\xi)=[ \xymatrix@C=0.5cm{
  C \ar[r]^{s} & E \ar[r]^{t} & F }].
$$
Then there exist an object $B\in\C$, an $\E$-extension $\theta$ with $\s(\theta)=[\xymatrix@C=0.5cm{
  B \ar[r]^u & D \ar[r]^v & F }]$, and a commutative diagram
$$
\xymatrix{
   A \ar[r]^x\ar@{=}[d] & B \ar[r]^y\ar[d]^u & C\ar[d]^s\\
    A \ar[r]^{z} & D \ar[r]^{w}\ar[d]^v &E\ar[d]^t\\
    &F\ar@{=}[r] &F }
$$
in $\C$ satisfying the following compatibilities.
 \begin{enumerate}
\item[(i)] $\s(s^\star\eta)=[\xymatrix@C=0.5cm{
  A \ar[r]^x & B \ar[r]^y & C }].$
\item[(ii)] $y_\star\theta=\xi$.
\item[(iii)] $x_\star\eta=t^\star\theta$.
\end{enumerate}
\end{enumerate}}
\end{definition}

\begin{definition}{\rm
(\cite[Definition 2.19]{NP}) Let $(\C,\E,\s)$ be a triple satisfying (ET1) and (ET2).
 \begin{enumerate}
\item[(1)] If a sequence $\xymatrix@C=0.5cm{
  A \ar[r]^x & B \ar[r]^y & C }$ realizes an $\E$-extension $\delta\in\E(C,A)$, then we call the pair $(\xymatrix@C=0.5cm{
  A \ar[r]^x & B \ar[r]^y & C },\delta)$ an \emph{$\E$-triangle}, and write it in the following way
  $$
  \xymatrix@C=0.5cm{
  A \ar[r]^x & B \ar[r]^y & C\ar@{-->}[r]^\delta&. }
  $$
  In this case, $x$ is called an \emph{$\E$-inflation}, and $y$ is called an \emph{$\E$-deflation}.
\item[(2)] Let $
  \xymatrix@C=0.5cm{
  A \ar[r]^x & B \ar[r]^y & C\ar@{-->}[r]^\delta& }
  $ and $
  \xymatrix@C=0.5cm{
  A' \ar[r]^{x'} & B' \ar[r]^{y'} & C'\ar@{-->}[r]^{\delta'}& }
$ be any pair of $\E$-triangles. If a triple $(a,b,c)$ realizes $(a,c):\delta\ra\delta'$ as in the condition (R),
then we write it as
  $$
\xymatrix{
   A \ar[r]^x\ar[d]^a & B \ar[r]^y\ar[d]^b & C\ar[d]^c\ar@{-->}[r]^\delta& \\
    A' \ar[r]^{x'} & B' \ar[r]^{y'} & C' \ar@{-->}[r]^{\delta'}&, }
$$
and call the triple $(a,b,c)$ a \emph{morphism of  $\E$-triangles}.
\end{enumerate}
}
\end{definition}

We collect some examples of extriangulated categories as follows.

\begin{example}\label{2.8}
\begin{enumerate}
\item[]
\end{enumerate}
{\rm (1) All abelian categories are extriangulated categories. In fact, let $\mathcal{A}$ be an abelian category. Then
$\E:=\Ext^1_{\mathcal{A}}(-,-):\mathcal{A}^{\operatorname{op}} \times\mathcal{A}\rightarrow \mathfrak{Ab}$ and the
realization $\mathfrak{s}$ is defined by associating equivalence classes of short exact sequences to itself.

(2) Every subbifunctor $\mathcal{F}(-,-)\subseteq\Ext^1_{\mathcal{A}}(-,-)$ over an abelian category $\mathcal{A}$
induces an extriangulated category, where $\E:=\mathcal{F}(-,-)$ and its corresponding realization
$\mathfrak{s}:=\mathfrak{s}\mid_{\mathcal{F}}$. A trivial example is $\mathcal{F}=0$, that is, consider all split
short exact sequences over $\mathcal{A}$. Moreover, for example, let $R$ be a ring, recall that a left $R$-module $M$ is
called \emph{Gorenstein projective} if there exists an exact sequence
$$\xymatrix@C=0.5cm{\cdots\ar[r] & P_1\ar[r] &P_0\ar[r] &P^0\ar[r] &P^1\ar[r] &\cdots}$$
in $R$-$\Mod$ (the category of left $R$-modules) with all $P_i,P^i$ projective, such that it stays exact after applying
the functor $\Hom_R(-,P)$ for any projective left $R$-module $P$, and $M=\mbox{Im}(P_0\ra P^0)$. Dually, we the notion
of Gorenstein injective left $R$-modules is defined.
If moreover $R$ is a Gorenstein ring, that is, $R$ is a left and right Noetherian ring with finite left and right
self-injective dimensions, then we may get the corresponding Gorenstein derived functor $\mbox{GExt}^1_R(-,-)$ (\cite{EJ}).
In this case, we have an extriangulated category $(R\mbox{-Mod},\mbox{GExt},\mathfrak{s})$, where the $\mbox{GExt}$-triangles
are those short exact sequences in $R\mbox{-Mod}$ which stay exact after applying
the functor $\Hom_R(-,G)$ for any $G\in\mathcal{GP}$ (or equivalently, after applying
the functor $\Hom_R(H,-)$ for any $H\in\mathcal{GI}$). Here $\mathcal{GP}$ and $\mathcal{GI}$ stand for the full subcategories
of $R$-$\Mod$ consisting of all Gorenstein projective and injective left $R$-modules respectively.

Recall that a short exact sequence $0\ra A\ra B\ra C\ra 0$ in $R$-$\Mod$ is called \emph{pure exact}
if for any finitely presented left $R$-modules $F$, the induce sequence $\Hom_R(F,B)\ra \Hom_R(F,C)\ra 0$ is exact.
The pure injective (resp. pure projective) left $R$-modules are those modules which are injective (resp. projective)
with respect to all short pure exact sequences in $R\mbox{-Mod}$. It is well known that there exist enough pure injective
and pure projective objects in $R\mbox{-Mod}$. Following the corresponding pure projective and pure injective resolutions,
we have the cohomological functor $\mbox{PExt}^1_R(-,-)$ (\cite{KS,S}). Then $(R\mbox{-Mod},\mbox{PExt},\mathfrak{s})$ is an
extriangulated category, where the $\mbox{Pext}$-triangles are those short exact sequences in $R\mbox{-Mod}$ which are pure exact.

(3) Exact categories $\mathcal{C}$ such that $\Ext^1(-,-):\mathcal{C}^{op}\times\mathcal{C}\to \mathfrak{A}b$ is
a biadditive functor (especially, exact and small categories) are extriangulated categories, see \cite[Example 2.13]{NP}.
Note that for a ring $R$, the subcategory $\mathcal{GP}$ of $R\mbox{-Mod}$ is closed under extensions and hence it is in fact
an exact category. Thus we also have an extriangulated category $(\mathcal{GP},\mathcal{E},\mathfrak{s})$, where $\mathcal{E}$
is the collection of all short exact sequences  in $R\mbox{-Mod}$ whose terms in $\mathcal{GP}$.

(4) Triangulated categories are extriangulated categories. In details, let $\mathcal{T}$ be a triangulated category
and $[1]$ the shift functor. Set $\E:=\mathcal{T}(-,-[1])$, and for any $\delta\in \E(Z,X)=\mathcal{T}(Z,X[1])$,
choose a triangle $\xymatrix@C=0.5cm{X\ar[r]& Y\ar[r]& Z\ar[r]^{\delta} & X[1]}$ and define
$\mathfrak{s}(\delta):=[\xymatrix@C=0.5cm{X\ar[r]& Y\ar[r]& Z}]$, see \cite[Section 3.3]{NP}.

(5) All extension-closed subcategories of extriangulated categories are again extriangulated, see \cite[Remark 2.18]{NP}.

(6) Nakaoka and Palu in \cite{NP} provided a construction for which extriangulated categories are neither exact nor triangulated.
That is,  let  $\mathcal{T}$ be an extriangulated category and $\mathcal{X}$ a full subcategory of $\mathcal{T}$. Denote by
$\mathcal{P}$ (resp. $\mathcal{I}$) the full subcategory consisting of projective (resp. injective) objects in $\mathcal{T}$.
If $\mathcal{X}\subseteq \mathcal{P}\cup\mathcal{I}$, then the quotient category $\mathcal{T}/\mathcal{X}$ is an extriangulated category,
see \cite[Proposition 3.30]{NP} for more details.

(7) Zhou and Zhu in \cite[Corollary 4.10 and Remark 4.11]{ZZ} also provided a construction for which extriangulated categories
are neither exact nor triangulated. That is, let $\mathcal{T}$ be an extriangulated category with Auslander-Reiten translation $\tau$
and $\mathcal{X}$ a functorrially finite subcategory of $\mathcal{T}$ which satisfies $\tau\mathcal{X}=\mathcal{X}$. For any $X,Z\in \mathcal{T}$,
define $\E(Z,X)\subseteq\mathcal{T}(Z,X[1])$ to be the collection of equivalence classes of triangles
$\xymatrix@C=0.5cm{X\ar[r]^f& Y\ar[r]& Z\ar[r]^{\delta} & X[1]}$ such that $\mathcal{T}(f,X')$ is epic for any $X'\in\mathcal{X}$, and define
$\mathfrak{s}{\delta}:=[\xymatrix@C=0.5cm{X\ar[r]& Y\ar[r]& Z}]$. Then $(\mathcal{T},\E,\mathfrak{s})$ is a Frobenius extriangulated category.
If $\mathcal{X}\neq \{0\}$, then $(\mathcal{T},\E,\mathfrak{s})$ is not triangulated; and if $\mathcal{X}\neq \mathcal{T}$, then it is not exact.}
\end{example}

\begin{remark}{\rm
Let $(\C,\E,\s)$ be a triple satisfying (ET1) and (ET2), and let
$\xymatrix@C=0.5cm{
  A \ar[r]^x & B \ar[r]^y & C\ar@{-->}[r]^\delta& }
  $ be an $\E$-triangle.
\begin{enumerate}
\item[(1)] For any $a\in\C(A,A')$,  there exists a morphism of  $\E$-triangles
$$
\xymatrix{
   A \ar[r]^x\ar[d]^a & B \ar[r]^y\ar[d] & C\ar@{=}[d]\ar@{-->}[r]^\delta& \\
    A' \ar[r]^{x'} & B' \ar[r]^{y'} & C \ar@{-->}[r]^{a_\star\delta}&. }
$$
\item[(2)]  For any $c\in\C(C',C)$,  there exists a morphism of  $\E$-triangles
$$
\xymatrix{
   A \ar[r]^{x'}\ar@{=}[d] & B' \ar[r]^{y'}\ar[d] & C'\ar[d]^c\ar@{-->}[r]^{c^\star\delta}& \\
    A \ar[r]^{x} & B \ar[r]^{y} & C \ar@{-->}[r]^{\delta}&.}
$$
\end{enumerate}}
\end{remark}

We introduce the following

\begin{definition}
{\rm Let $(\C,\E,\s)$ be a triple satisfying (ET1) and (ET2). An object $E\in\C$ is said to be  \emph{injective} if for any $\E$-triangle
$\xymatrix@C=0.5cm{
  A \ar[r]^x & B \ar[r]^y & C\ar@{-->}[r]^\delta& }
  $ and each morphism $e\in\C(A,E)$, there exists $b\in\C(B,E)$ such that $e=bx$.}
\end{definition}

\begin{lemma}\label{2.11}
Let $(\C,\E,\s)$ be a triple satisfying (ET1), (ET2) and (ET3). Then the following statements are equivalent for an object $E\in\C$.
\begin{enumerate}
\item[(1)] $E$ is injective.
\item[(2)] $\E(C,E)=0$ for any $C\in\C$.
\item[(3)] Any $\E$-triangle
$\xymatrix@C=0.5cm{
E \ar[r] & B \ar[r] & C\ar@{-->}[r]^\delta& }$ splits.
  \end{enumerate}
\end{lemma}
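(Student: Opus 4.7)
My plan is to prove the equivalences cyclically via $(1)\Rightarrow(2)\Rightarrow(3)\Rightarrow(1)$; the middle step is elementary, while the other two both rest on (ET3).

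For $(1)\Rightarrow(2)$, I take an arbitrary $\E$-extension $\delta\in\E(C,E)$ and realize it by an $\E$-triangle $\xymatrix@C=0.5cm{E\ar[r]^x&B\ar[r]^y&C\ar@{-->}[r]^\delta&}$. Injectivity of $E$ applied to $\id_E\colon E\to E$ yields $b\colon B\to E$ with $bx=\id_E$, and the matrix morphism $\binom{b}{y}\colon B\to E\oplus C$ renders the diagram
$$\xymatrix{E \ar[r]^x \ar@{=}[d] & B \ar[d]^{\binom{b}{y}} \\ E \ar[r]^{\binom{1}{0}} & E\oplus C}$$
commutative. Its bottom row completes to the split $\E$-triangle $\xymatrix@C=0.5cm{E\ar[r]&E\oplus C\ar[r]&C\ar@{-->}[r]^0&}$, so by (ET3) there exists $c\in\C(C,C)$ such that $(\id_E,\binom{b}{y},c)$ realizes a morphism $(\id_E,c)\colon\delta\to 0$ of $\E$-extensions. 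Unpacking the definition then gives $\delta=(\id_E)_\star\delta=c^\star 0=0$, and hence $\E(C,E)=0$.

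For $(2)\Rightarrow(3)$, any $\E$-triangle $\xymatrix@C=0.5cm{E\ar[r]&B\ar[r]&C\ar@{-->}[r]^\delta&}$ satisfies $\delta\in\E(C,E)=0$, so by additivity of $\s$ its realization equals $\s(0)=0$, which is the equivalence class of the split sequence. For $(3)\Rightarrow(1)$, given an $\E$-triangle $\xymatrix@C=0.5cm{A\ar[r]^x&B\ar[r]^y&C\ar@{-->}[r]^\delta&}$ and a morphism $e\colon A\to E$, apply Remark 2.5(1) to obtain a comparison diagram whose bottom row is an $\E$-triangle $\xymatrix@C=0.5cm{E\ar[r]^{x'}&B'\ar[r]^{y'}&C\ar@{-->}[r]^{e_\star\delta}&}$ and whose middle vertical map I denote $b'\colon B\to B'$. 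By hypothesis (3) this triangle splits, so $x'$ admits a retraction $r\colon B'\to E$; then $rb'\colon B\to E$ satisfies $(rb')x=r(b'x)=rx'e=e$, extending $e$ through $x$ as required.

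The main subtlety lies in $(1)\Rightarrow(2)$: the axiom (ET3) must be invoked not to construct $c$ for its own sake but to transfer the diagram-level comparison to the level of $\E$-extensions, producing the equality $(\id_E)_\star\delta=c^\star 0$. Merely exhibiting a morphism of sequences would not suffice, since a priori distinct $\E$-extensions can share a realization. Everything else reduces to routine pushout and retraction bookkeeping, and uses only (ET1)--(ET3) together with Remark 2.5.
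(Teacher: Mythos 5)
Your proof is correct and follows essentially the same route as the paper's: $(2)\Rightarrow(3)$ is immediate from additivity of $\s$, and $(3)\Rightarrow(1)$ is word for word the paper's argument (push out along $e$, use that the resulting triangle starting at $E$ splits, and compose the retraction of $x'$ with the comparison map $b'$). The only deviation is in $(1)\Rightarrow(2)$: you compare $\delta$ with the split triangle $E\to E\oplus C\to C$, whereas the paper compares it with $E\xrightarrow{\id_E}E\to 0$. Your choice makes the commutativity of the left square depend on the identity $y\circ x=0$ (the second component of $\binom{b}{y}\circ x$), which you assert silently. That identity does hold under (ET1)--(ET2) --- apply condition (R) to the morphism $(0,\id_C)\colon\delta\to 0$ of $\E$-extensions and read off the second component of the resulting $B\to A\oplus C$; it is also recorded in Nakaoka--Palu --- but it is not an axiom and deserves a word. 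The paper's target triangle $E\to E\to 0$ is chosen precisely so that the left square commutes from $be=\id_E$ alone and the map $C\to 0$ comes for free. With that one step justified, your conclusion $\delta=(\id_E)_\star\delta=c^\star 0=0$ goes through exactly as in the paper.
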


\begin{proof}
(1) $\Rightarrow$ (2) Let $\delta\in\E(C,E)$ and $\s(\delta)=[\xymatrix@C=0.5cm{
E \ar[r]^e & B \ar[r]^y & C }]$. Since $E$ is injective by (1), there exists $b\in\C(B,E)$ such that $be=\id_E$;
that is, we have the following commutative diagram
$$\xymatrix{
E \ar[r]^e\ar[d]_{\id_E} & B \ar[r]^y\ar[d]^b & C\ar@{-->}[r]^\delta& \\
E \ar[r]^{{\id_E}} & E \ar[r] & 0 \ar@{-->}[r]^0& .}$$
By (ET3), we get a morphism of $\E$-triangles
$$\xymatrix{
E \ar[r]^e\ar[d]_{\id_E} & B \ar[r]^y\ar[d]^b & C\ar[d]^0\ar@{-->}[r]^\delta& \\
E \ar[r]^{{\id_E}} & E \ar[r] & 0 \ar@{-->}[r]^0& .}
$$
Thus we have $\delta=0^\star0=0$.

(2) $\Rightarrow$ (3) It is trivial.

(3) $\Rightarrow$ (1)  Let $\xymatrix@C=0.5cm{
A \ar[r]^x & B \ar[r]^y & C\ar@{-->}[r]^\delta& }$ be any $\E$-triangle.
Then for any $a\in\C(A,E)$, there exists a morphism of $\E$-triangles
$$
\xymatrix{
   A \ar[r]^x\ar[d]^a & B \ar[r]^y\ar[d]^b & C\ar@{=}[d]\ar@{-->}[r]^\delta& \\
    E \ar[r]^{x'} & B' \ar[r]^{y'} & C \ar@{-->}[r]^{a_\star\delta}& .}
$$
By assumption, the bottom $\E$-triangle splits, and hence there exists $b'\in\C(B',E)$ such that $b'x'=\id_{E}$.
Thus we have that $(b'b)x=b'x'a=a$ and $E$ is injective.
\end{proof}

Let $(\C,\E,\s)$ be a triple satisfying (ET1) and (ET2). We say that it has \emph{enough injective objects} if for
any $A\in\C$, there exists an $\E$-triangle
$\xymatrix@C=0.5cm{
A \ar[r]^x & E \ar[r]^y & C\ar@{-->}[r]^\delta& }$ with $E$ an injective object.

\section{Phantom morphisms}

\subsection{Phantom morphisms in additive categories}

\begin{definition}\label{3.1}{\rm
Let $\F$ be an additive subfunctor of $\E$ and $\varphi\in \C(X,C)$. We call $\varphi$ an \emph{$\F$-phantom morphism} if
$\varphi^\star\delta\in \F(X,A)$ for any $\delta\in\E(C,A)$. Dually, let $\psi\in\C(A,Y)$.
We call $\psi$ an \emph{$\F$-cophantom morphism} if $\psi_\star\delta\in \F(C,Y)$ for any $\delta\in\E(C,A)$.
}
\end{definition}

We denote by $\textbf{Ph}(\F)$ and $\Coph(\F)$ the classes of $\F$-phantom and $\F$-cophantom morphisms respectively.
In this paper, we only discuss the properties of $\F$-phantom morphisms in most cases, but we need to keep in mind that
the dual results hold true for  $\F$-cophantom morphisms, and we will directly use it if necessary.

We first note that $\textbf{Ph}(\F)$ is an ideal.
Indeed, let $\varphi\in \C(X,C)$ be an $\F$-phantom morphism. If $f\in \C(X',X)$, then for any $\delta\in\E(C,A)$,
we have $(\varphi f)^\star\delta=f^\star(\varphi^\star\delta)$. Since $\varphi^\star\delta\in\F(X,A)$, we have
$(\varphi f)^\star\delta\in\F(X',A)$, and hence $\varphi f$ is an $\F$-phantom morphism. Similarly, let $g\in \C(A,A')$,
by the equality $(g\varphi)^\star\delta=\varphi^\star(g^\star\delta)$, we have $g\varphi$ is an $\F$-phantom morphism.
Moreover, if $\varphi_1,\varphi_2\in \C(X,C)$ are $\F$-phantom morphisms, then by the equality
$(\varphi_1+\varphi_2)^\star\delta={\varphi_1}^\star\delta+{\varphi_2}^\star\delta$, we have that $\varphi_1+\varphi_2$
is also an $\F$-phantom morphism. Therefore $\textbf{Ph}(\F)$ is an ideal.

\begin{example}
{\rm
\begin{itemize}
Let $R$ be a ring and $\E=\Ext^1_R(-,-)$
\item[(1)] If $\F=\mbox{PExt}^1_R(-,-)$ is as in Example \ref{2.8}(2),
then the $\F$-phantom morphism is the phantom morphism in \cite{He07} and the pure phantom morphism in \cite{FGHT}.
\item[(2)] If $\F=\mbox{GExt}^1_R(-,-)$ is as in Example \ref{2.8}(2), then the $\F$-(co)phantom  morphism is the
Gorenstein (co)phantom morphism in \cite{ZH16}. The following is a concrete example. Let $R=kQ/I$ with $k$ a field,
where $Q$ is the quiver
$$\xymatrix@R=20pt@C=20pt{
& 1\ar[ld]_{a_1}&\\
2\ar[rr]^{a_2}&&3\ar[lu]_{a_3}
}
$$
and $I=\langle a_1a_3a_2,a_2a_1a_3\rangle$. We can identify the irreducible Gorenstein cophantom morphisms
in the category of finite generated left $R$-modules as follows:
$$
\xymatrix@R=5pt@C=10pt{&&&{\tiny\begin{array}{c}
      {1} \\ {2} \\ {3} \\ {1}
\end{array}}\ar@{-->}[dr]&&&\\
{\tiny\begin{array}{c}
     3\\ {1} \\ {2}
\end{array}}\ar@{-->}[dr]\ar@{.}[dd]&&{\tiny\begin{array}{c}
      {2} \\ {3} \\ {1}
\end{array}}\ar@{-->}[dr]\ar@{-->}[ur]&&{\tiny\begin{array}{c}
      {1} \\ {2} \\ {3}
\end{array}}\ar@{-->}[dr]&&{\tiny\begin{array}{c}
     3\\ {1} \\ {2}
\end{array}}\ar@{.}[dd]\\
&{\tiny\begin{array}{c} 3 \\ 1\end{array}}\ar@{-->}[ur]\ar[dr]&&{\tiny\begin{array}{c}
       {2} \\ {3}
\end{array}}\ar@{-->}[ur]\ar@{-->}[dr]&&{\tiny\begin{array}{c}    1 \\ 2 \end{array}}\ar@{-->}[ur]\ar@{-->}[dr]&\\
{\tiny\begin{array}{c}
      {1}
\end{array}}\ar@{-->}[ur]&&{\tiny \begin{array}{c}3\end{array}}\ar@{-->}[ur]&&
{\tiny
\begin{array}{c}2\end{array}}\ar[ur]&&{\tiny\begin{array}{c}
      {1}
\end{array}\ ,}
}
$$
where the morphisms marked by  the dashed arrows are all irreducible Gorenstein cophantom morphisms.
\end{itemize}}
\end{example}

Let $\I$ be an ideal of $\C$. We write
$$\I^\star:=\{i^\star\delta\mid i\in\I \mbox{ and }\delta \mbox{ is any $\E$-extension} \},$$
$$\I_\star:=\{i_\star\delta\mid i\in\I \mbox{ and }\delta \mbox{ is any $\E$-extension} \}.$$

\begin{proposition}\label{3.2}
$\I^\star$ is a minimal additive subfunctor of $\E$ for which $\I\subseteq\Ph(\I^\star)$.
\end{proposition}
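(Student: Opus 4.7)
The plan is to unpack the three requirements hidden in the statement: first that $\I^\star$ is an additive subfunctor of $\E$; second that $\I\subseteq \Ph(\I^\star)$; and third that any additive subfunctor $\F\subseteq\E$ with $\I\subseteq\Ph(\F)$ must contain $\I^\star$. The last two bullets will be essentially a translation of the definitions, so the real content lies in verifying the subfunctor axioms.

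First I would define, for each pair $(X,A)\in\C^{\op}\times\C$, the set
\[
\I^\star(X,A)=\{\,i^\star\delta \mid i\in\I(X,C),\ \delta\in\E(C,A)\text{ for some }C\in\C\,\},
\]
and check that it is a subgroup of $\E(X,A)$. Containing $0$ follows from $0=0^\star\delta$ together with the fact that any ideal contains the zero morphisms. Closure under negation is immediate via $-(i^\star\delta)=i^\star(-\delta)$ (or $(-i)^\star\delta$). Closure under addition is the only genuinely nontrivial point: given two elements $i_1^\star\delta_1$ and $i_2^\star\delta_2$ in $\I^\star(X,A)$, with $i_k\in\I(X,C_k)$ and $\delta_k\in\E(C_k,A)$, I would introduce $i={i_1\choose i_2}\colon X\to C_1\oplus C_2$ (which lies in $\I$ because $i={\id\choose 0}\circ i_1+{0\choose \id}\circ i_2$ and ideals are closed under sums and left-multiplication) together with the single $\E$-extension $\delta=p_1^\star\delta_1+p_2^\star\delta_2\in\E(C_1\oplus C_2,A)$, where $p_k$ are the projections. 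Using biadditivity of $\E$ and $p_k i=i_k$, one gets
\[
i^\star\delta=i^\star p_1^\star\delta_1+i^\star p_2^\star\delta_2=i_1^\star\delta_1+i_2^\star\delta_2,
\]
placing the sum back in $\I^\star(X,A)$.

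Next I would check functoriality in both variables: for $f\in\C(X',X)$, $f^\star(i^\star\delta)=(if)^\star\delta\in\I^\star(X',A)$ since $if\in\I$ by the ideal property; for $g\in\C(A,A')$, $g_\star(i^\star\delta)=i^\star(g_\star\delta)\in\I^\star(X,A')$ directly from the definition. The naturality squares and biadditivity of $\I^\star$ are then inherited from $\E$, so $\I^\star$ is an additive subfunctor.

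Finally, both remaining properties are immediate from the definition of $\I^\star$: for any $i\in\I$ with codomain $C$ and any $\delta\in\E(C,A)$ the element $i^\star\delta$ is manifestly in $\I^\star(X,A)$, giving $\I\subseteq\Ph(\I^\star)$; and if $\F$ is any additive subfunctor of $\E$ with $\I\subseteq\Ph(\F)$, then every generator $i^\star\delta$ of $\I^\star$ must lie in $\F$, so $\I^\star\subseteq\F$. The only step that is not a direct definitional check is the closure under addition of $\I^\star(X,A)$, and this I expect to be the main (if still mild) obstacle; the trick of encoding the two morphisms as a single one into a direct sum and bundling the two $\E$-extensions via $p_1^\star\delta_1+p_2^\star\delta_2$ is what makes the argument go through.
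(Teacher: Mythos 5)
Your proof is correct and follows the same route as the paper's: functoriality of $\I^\star$ in both variables via the ideal property of $\I$ (e.g.\ $f^\star(i^\star\delta)=(if)^\star\delta$ with $if\in\I$), and minimality by observing that each generator $i^\star\delta$ already lies in $\F(X,A)$ whenever $\I\subseteq\Ph(\F)$. The one place you go beyond the paper is the closure of $\I^\star(X,A)$ under addition: the paper dismisses this with ``the additivity of $\I^\star$ is induced by that of $\E$,'' whereas your direct-sum device --- replacing $i_1,i_2$ by $i={i_1\choose i_2}\in\I(X,C_1\oplus C_2)$ and the two extensions by the single extension $p_1^\star\delta_1+p_2^\star\delta_2$, so that $i^\star(p_1^\star\delta_1+p_2^\star\delta_2)=i_1^\star\delta_1+i_2^\star\delta_2$ --- is exactly the argument needed to make that step honest.
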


\begin{proof}
We first prove that $\I^\star$ is an additive subfunctor of $\E$. Let $\varphi\in\C(X,C)$ and $\delta\in\I^\star(C,A)$,
that is, there exist $i\in\I(C,C')$ and $\delta'\in\E(C',A)$ such that $\delta=i^\star\delta'$. Then
$\varphi^\star\delta=\varphi^\star i^\star\delta'=(i\varphi)^\star\delta'$. Since $\I$ is an ideal of $\C$, we have
$i\varphi\in \I(X,C')$, and hence $\varphi^\star\delta\in \I^\star(X,A)$. Similarly, for $\psi\in \C(A,Y)$, by the equalities
$\psi_\star\delta=\psi_\star i^\star\delta'=i^\star\psi_\star\delta'$, we have $\psi_\star\delta\in\I^\star(C,Y)$.
The additivity of $\I^\star$ is induced by that of $\E$.

Next we prove the minimality about the property $\I\subseteq\Ph(\I^\star)$. Let $\F$ be any additive subfunctor of $\E$ satisfying
$\I\subseteq\Ph(\F)$. Let $\delta\in\I^\star(X,A)$, that is, there exist  $i\in\I(X,C)$ and $\delta'\in\E(C,A)$ such that
$\delta=i^\star\delta'$. Since $\I\subseteq\Ph(\F)$, we have $i\in\Ph(\F)$, and hence $\delta=i^\star\delta'\in\F(X,A)$.
This means that $\I^\star\subseteq\F$.
\end{proof}

Let $\mathcal{M}$ be a class of morphisms in $\C$. We write
$$\mathcal{M}^{\perp_\E}:=\{g\in\Mor\C\mid m^\star g_\star \delta=0
\mbox{ for any } m\in\mathcal{M}\mbox{ and any $\E$-extension } \delta \}.$$
Then $\mathcal{M}^{\perp_\E}$ is an ideal of $\C$. Indeed, if $g_1,g_2\in\mathcal{M}^{\perp_\E}$, then for any
$m\in\mathcal{M}$ and any $\E$-extension $\delta$, we have $m^\star (g_1+g_2)_\star \delta=
m^\star {g_1}_\star\delta+m^\star {g_2}_\star \delta=0$, which means that $g_1+g_2\in\mathcal{M}^{\perp_\E}$.
Let $g\in\mathcal{M}^{\perp_\E}$ and $h\in\Mor\C$ such that $hg$ is defined. Then $m^\star (hg)_\star \delta=
m^\star h_\star g_\star \delta= h_\star m^\star g_\star \delta=0$ implies that $hg\in\mathcal{M}^{\perp_\E}$.
Similarly, if $gk$ is defined for $k\in\Mor\C$, then $gk\in\mathcal{M}^{\perp_\E}$. Therefore $\mathcal{M}^{\perp_\E}$
is an ideal of $\C$.

Dually, we write
$${^{\perp_\E}\mathcal{M}}:=\{g\in\Mor\C\mid g^\star m_\star \delta=0
\mbox{ for any } m\in\mathcal{M}\mbox{ and any $\E$-extension } \delta \}.$$

\begin{definition}\label{3.3}{\rm
Let $f\in\C(X,C)$ and $g\in\C(A,Y)$. The pair $(f,g)$ is said to be \emph{$\E$-orthogonal} if $f^\star g_\star\delta=0$
(or equivalently, $g_\star f^\star \delta=0$) for any $\delta\in\E(C,A)$.}
\end{definition}

We write
$$\F\mbox{-}{\inj}:=\{i\in\Mor\C\mid i_\star\delta=0 \mbox{ for each $\F$-extension }\delta\},$$
and call the elements in $\F\mbox{-}{\inj}$ \emph{$\F$-injective morphisms}. Dually, we write
$$\F\mbox{-}{\proj}:=\{i\in\Mor\C\mid i^\star\delta=0 \mbox{ for each $\F$-extension }\delta\},$$
and call the elements in $\F\mbox{-}{\proj}$ \emph{$\F$-projective morphisms}.

\begin{proposition}\label{3.4}
\begin{enumerate}
\item[]
\item[(1)] The pair $(\Ph(\F),\F\mbox{-}\inj)$ is $\E$-orthogonal.
\item[(2)] Let $\I$ be an ideal of $\C$. Then $\I^\star\mbox{-}\inj=\I^{\perp_\E}$.
\end{enumerate}
\end{proposition}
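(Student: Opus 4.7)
The plan is to unwind both definitions and exploit the single fact we need about $\E$ as a bifunctor, namely the commutativity relation $\varphi^\star g_\star\delta = g_\star \varphi^\star\delta$ for composable push-forward and pullback, which was recorded right after Definition 2.1 of the excerpt. No heavy machinery is required.

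For part (1), I would fix arbitrary $\varphi\in\Ph(\F)$, say $\varphi\in\C(X,C)$, and $g\in\F\mbox{-}\inj$, say $g\in\C(A,Y)$, and an arbitrary $\delta\in\E(C,A)$. By Definition 3.1, $\varphi^\star\delta$ lies in $\F(X,A)$. Since $g$ is $\F$-injective, applying $g_\star$ to any element of $\F$ gives zero, so $g_\star(\varphi^\star\delta)=0$. Using bifunctoriality to swap the order, $\varphi^\star g_\star\delta = g_\star\varphi^\star\delta = 0$, which is the definition of $(\varphi,g)$ being $\E$-orthogonal (Definition 3.3). Since $\varphi$, $g$ and $\delta$ were arbitrary, the pair $(\Ph(\F),\F\mbox{-}\inj)$ is $\E$-orthogonal.

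For part (2), the two sides will turn out to be literally the same condition after unwinding. For the inclusion $\I^\star\mbox{-}\inj\subseteq\I^{\perp_\E}$, take $g\in\I^\star\mbox{-}\inj$ and any $i\in\I$, any $\E$-extension $\delta$; then $i^\star\delta\in\I^\star$ by the definition of $\I^\star$ given just before Proposition 3.2, so $g_\star i^\star\delta=0$, and commuting $g_\star$ past $i^\star$ yields $i^\star g_\star\delta=0$, i.e., $g\in\I^{\perp_\E}$. Conversely, for $\I^{\perp_\E}\subseteq\I^\star\mbox{-}\inj$, take $g\in\I^{\perp_\E}$ and any $\I^\star$-extension $\eta$; by definition $\eta=i^\star\delta$ for some $i\in\I$ and some $\delta\in\E$, hence $g_\star\eta = g_\star i^\star\delta = i^\star g_\star\delta = 0$, so $g\in\I^\star\mbox{-}\inj$.

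There is no real obstacle; the only subtlety is making sure the quantifiers match up when passing between ``$\varphi^\star\delta\in\F$ for every $\delta$'' and ``every element of $\F$ has the form $\varphi^\star\delta$,'' but for part (2) the second direction genuinely uses that every $\I^\star$-extension is by definition a pullback of some $\E$-extension along some morphism in $\I$, which is exactly what allows the argument to close. I would present the proof as roughly half a page: one short paragraph for (1) and two for the two inclusions in (2).
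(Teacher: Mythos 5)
Your proof is correct and follows essentially the same route as the paper: part (1) is exactly the ``clear'' computation $\varphi^\star g_\star\delta=g_\star\varphi^\star\delta=0$ the paper has in mind, and your two inclusions in part (2) match the paper's argument step for step, including the key use of the fact that every $\I^\star$-extension is by definition of the form $i^\star\delta'$ with $i\in\I$.
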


\begin{proof}
(1) It is clear.

(2) Let $j\in\I^\star\mbox{-}\inj$. For any $i\in\I$ and any $\E$-extension $\delta$, we have that $i^\star\delta$
is an $\I^\star$-extension. So $i^\star j_\star\delta=j_\star i^\star\delta=0$ and $j\in \I^{\perp_\E}$.
Conversely, let $j\in \I^{\perp_\E}$. For any $\I^\star$-extension $\delta$, there exist $i\in\I$ and an $\E$-extension
$\delta'$ such that $\delta=i^\star\delta'$. So $j_\star\delta=j_\star i^\star\delta'=i^\star j_\star \delta'=0$ and
$j\in\I^\star\mbox{-}\inj$.
\end{proof}

\subsection{Phantom morphisms in extriangulated categories}

Let $(\C,\E,\s)$ be a triple satisfying (ET1) and (ET2), and let
$\xymatrix@C=0.5cm{
A \ar[r]^x & B \ar[r]^y & C\ar@{-->}[r]^\delta& }$ be any $\E$-triangle. Then for a morphism $\varphi\in\C(X,C)$,
there exists a morphism of  $\E$-triangles
$$\xymatrix{
A \ar[r]^{x'}\ar@{=}[d] & B' \ar[r]^{y'}\ar[d]^g & X\ar[d]^\varphi\ar@{-->}[r]^{\varphi^\star\delta}& \\
A \ar[r]^{x} & B \ar[r]^{y} & C \ar@{-->}[r]^{\delta}& .}$$
We easily see that $\varphi\in\C(X,C)$ is an $\F$-phantom morphism if and only if every $\E$-triangle
$\xymatrix@C=0.5cm{
A \ar[r]^{x'}& B' \ar[r]^{y'} & X\ar@{-->}[r]^{\varphi^\star\delta}&}$ induced by the above is an $\F$-triangle.

Now let $(\C,\E,\s)$ be a triple satisfying (ET1), (ET2) and ${\rm (ET3)^{op}}$, and let
$\xymatrix@C=0.5cm{A \ar[r]^x & B \ar[r]^y & C\ar@{-->}[r]^\delta& }$ be any $\E$-triangle and $\varphi\in\C(X,C)$ an
$\F$-phantom morphism. For any $\F$-projective morphism $p\in\C(P,X)$, we have a morphism of $\E$-triangles
$$\xymatrix{
A \ar[r]^{}\ar@{=}[d] & Q \ar[r]^{}\ar[d] & P\ar[d]^p\ar@{-->}[r]^{{p^\star\varphi^\star\delta}}& \\
A \ar[r]^{x'} & B' \ar[r]^{y'} & X \ar@{-->}[r]^{\varphi^\star\delta}&. }$$
Since $\varphi^\star\delta$ is an $\F$-extension and $p$ is an $\F$-projective morphism, we have ${p^\star\varphi^\star\delta}=0$,
and hence there exists $p'\in\C(P,B')$ such that $p=y'p'$.
Then $ygp'=\varphi y'p'=\varphi p$ and we get the following commutative diagram
$$\xymatrix{
0 \ar[r]^{} & P \ar[r]^{\id_P}\ar[d]^{gp'} & P\ar[d]^{\varphi p}\ar@{-->}[r]^{0}& \\
A \ar[r]^{x} & B \ar[r]^{y} & C \ar@{-->}[r]^{\delta}&. }$$
By ${\rm (ET3)^{op}}$, we have a morphism of $\E$-triangles
$$\xymatrix{
0 \ar[r]^{}\ar[d] & P \ar[r]^{\id_P}\ar[d]^{gp'} & P\ar[d]^{\varphi p}\ar@{-->}[r]^{0}& \\
A \ar[r]^{x} & B \ar[r]^{y} & C \ar@{-->}[r]^{\delta}&. }$$
In particular, $(\varphi p)^\star\delta=0$, that is, the composition $\xymatrix@C=0.5cm{
  P \ar[r]^p & X \ar[r]^\varphi & C }$ is an $\E$-projective morphism.

Therefore, if we consider the stable category $\underline{(\C,\E,\s)}:=(\C,\E,\s)\diagup\E\mbox{-}\proj$,
where the objects in $\underline{(\C,\E,\s)}$ are the objects in $\C$, and for any $X,Y\in\C$, the morphism set
$\Hom(X,Y)$ in $\underline{(\C,\E,\s)}$ are the morphism set $\C(X,Y)\diagup\E\mbox{-}\proj$, then
$\F$-phantom morphisms make $\F$-projective morphisms vanish in $\underline{(\C,\E,\s)}$.
This is also why we call these morphisms ``$\F$-phantom''  on some level.

\begin{definition}\label{3.5}
{\rm Let $\I$ be an ideal of $\C$ and $C\in\C$.
\begin{enumerate}
\item[(1)] An \emph{$\I$-precover} of $C$ is a morphism $i:X\ra C$ in $\I$ such that any morphism
$i':X'\ra C$ in $\I$ factors through $i$, that is, there exists a morphism $g:X'\ra X$ such that $i'=ig$.
$$\xymatrix{& X'\ar[d]^{i'}\ar@{-->}[ld]_g \\X\ar[r]^i & C.}$$
\item[(2)] Let $(\C,\E,\s)$ be a triple satisfying (ET1) and (ET2). A morphism  $i:X\ra C$ in $\I$ is called
a \emph{special $\I$-precover} of $C$ if there exists a morphism of $\E$-triangles
$$\xymatrix{
A \ar[r]\ar[d]_j & B \ar[r]\ar[d] & C\ar@{=}[d]\ar@{-->}[r]^\delta& \\
A' \ar[r] & X \ar[r]^{i} & C \ar@{-->}[r]^{\delta'}&}$$
with $j\in\I^{\perp_\E}$.
\end{enumerate}}
\end{definition}

An ideal $\I$ of $\C$ is called a {\it (special) precovering ideal}
of $\mathscr{C}$ if any object in $\mathscr{C}$ admits an (a special) $\I$-precover.
Dually, the notions of a {\it (special) $\I$-preenvelope} and
a {\it (special) preenveloping ideal} are defined.

In what follows, we always assume that the triple $(\C,\E,\s)$  satisfies (ET1) and (ET2).

\begin{proposition}\label{3.6}
Every special $\I$-precover is an $\I$-precover.
\end{proposition}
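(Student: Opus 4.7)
The plan is to derive the factorization $i'=ig$ from a direct vanishing calculation, for any given $i'\in\I(X',C)$. The starting observation is that the defining diagram of a special $\I$-precover, with left vertical $j$ and right vertical $\id_C$, encodes the identity $j_\star\delta=\id_C^\star\delta'=\delta'$, which is just the definition of a morphism of $\E$-extensions applied to this square.

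Next I would pull the bottom $\E$-triangle $\xymatrix@C=0.5cm{A'\ar[r]&X\ar[r]^i&C\ar@{-->}[r]^{\delta'}&}$ back along $i'$. Using Remark~2.9(2), this yields an $\E$-triangle
$$
\xymatrix@C=0.5cm{A'\ar[r] & X''\ar[r]^{y''} & X'\ar@{-->}[r]^{i'^\star\delta'} &}
$$
together with a middle morphism $h\in\C(X'',X)$ giving a commutative morphism of $\E$-triangles whose outer verticals are $\id_{A'}$ and $i'$, and in particular satisfying $ih=i'y''$. The key computation is now $i'^\star\delta'=i'^\star j_\star\delta=0$, where the first equality uses the identity of the previous paragraph and the second uses $i'\in\I$ together with $j\in\I^{\perp_\E}$ (invoked with $m=i'$, $g=j$, $\eta=\delta$ in the definition of $\I^{\perp_\E}$).

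Third, because $\s$ is an additive realization (axiom (ET2)), the top $\E$-triangle above realizes $0\in\E(X',A')$, so by the splitting observation recorded at the end of Section~2.1 its deflation $y''$ is a retraction. Fixing a section $s\in\C(X',X'')$ with $y''s=\id_{X'}$, I set $g:=hs\in\C(X',X)$ and verify $ig=ihs=i'y''s=i'$, which is the required factorization through $i$.

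The main obstacle is conceptual rather than technical: one must recognise that the left square in the special precover diagram records the identity $\delta'=j_\star\delta$, so that the $\I^{\perp_\E}$ hypothesis becomes directly applicable when pulling $\delta'$ back along any $\I$-morphism. Once this is spotted, the argument uses nothing beyond (ET1), (ET2), the pullback construction for $\E$-triangles in Remark~2.9, and the splitting property of additive realizations; none of the deeper axioms (ET3), ${\rm (ET3)^{op}}$, (ET4) or ${\rm (ET4)^{op}}$ are needed.
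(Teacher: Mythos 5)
Your argument is correct and coincides with the paper's own proof: both pull the bottom $\E$-triangle of the special-precover diagram back along an arbitrary $i'\in\I(X',C)$, observe that ${i'}^\star j_\star\delta=0$ because $i'\in\I$ and $j\in\I^{\perp_\E}$, and then use the resulting splitting to produce the factorization $i'=i(hs)$. The only difference is that you spell out the splitting step (additive realization, section of the retraction) slightly more explicitly than the paper does.
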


\begin{proof}
Let $C\in\C$, and $i:X\ra C$  is a special $\I$-precover of $C$. Then there exists a morphism of $\E$-triangles
$$\xymatrix{
A \ar[r]\ar[d]_j & B \ar[r]\ar[d] & C\ar@{=}[d]\ar@{-->}[r]^\delta& \\
A' \ar[r] & X \ar[r]^{i} & C \ar@{-->}[r]^{j_\star\delta}& }$$
with $j\in\I^{\perp_\E}$. Now for any $i':X'\ra C$ in $\I$, there exists a morphism of $\E$-triangles
$$\xymatrix{
A' \ar[r]\ar@{=}[d] & Y \ar[r]\ar[d]^k & X'\ar[d]^{i'}\ar@{-->}[r]^{{i'}^\star j_\star\delta}& \\
A' \ar[r] & X \ar[r]^i & C \ar@{-->}[r]^{j_\star\delta}& .}$$
Since $i'\in\I$ and $j\in\I^{\perp_\E}$, we have ${i'}^\star j_\star\delta=0$. So the sequence $\xymatrix@C=0.5cm{
A' \ar[r] & Y \ar[r] & X'}$ splits and there exists $g:X'\ra Y$ such that $i'=ikg$. It follows that
$i:X\ra C$  is an $\I$-precover of $C$.
\end{proof}

\begin{definition}\label{3.7}{\rm
An $\E$-orthogonal pair $(\I,\J)$ of ideals of $\C$ is called an \emph{$\E$-cotorsion pair} if
$\I={^{\perp_\E}\J}$ and $\J={\I^{\perp_\E}}$.}
\end{definition}

The following result gives a sufficient condition such that an $\E$-orthogonal pair of ideals is an $\E$-cotorsion pair.

\begin{theorem}\label{3.8}
If $\I$ is a special precovering ideal, then the pair $(\I,{\I^{\perp_\E}})$ of ideals is an $\E$-cotorsion pair.
\end{theorem}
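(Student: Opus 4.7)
The plan is to establish the two equalities defining an $\E$-cotorsion pair, namely $\I^{\perp_\E} = \I^{\perp_\E}$ (tautological, since we are declaring the second ideal to be $\I^{\perp_\E}$) and $\I = {}^{\perp_\E}(\I^{\perp_\E})$. The forward inclusion $\I \subseteq {}^{\perp_\E}(\I^{\perp_\E})$ is immediate from the definitions: if $i \in \I$ and $j \in \I^{\perp_\E}$, then $i^\star j_\star \delta = 0$ for every $\E$-extension $\delta$ by the very definition of $\I^{\perp_\E}$, and this is exactly the condition for $i \in {}^{\perp_\E}(\I^{\perp_\E})$. All the substance therefore lies in the reverse inclusion ${}^{\perp_\E}(\I^{\perp_\E}) \subseteq \I$.

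To prove this, fix any $f \in \C(Z,C)$ lying in ${}^{\perp_\E}(\I^{\perp_\E})$; the goal is to factor $f$ through some morphism already in $\I$, whereupon $f \in \I$ will follow since $\I$ is an ideal. Apply the special precovering hypothesis to $C$: there exists a special $\I$-precover $i \colon P \to C$ which, by Definition 3.5(2), fits into an $\E$-triangle $A' \to P \xrightarrow{i} C$ whose extension class has the form $j_\star \delta$, where $j \in \I^{\perp_\E}$ and $\delta$ is some $\E$-extension on $C$. Combining $f \in {}^{\perp_\E}(\I^{\perp_\E})$ with $j \in \I^{\perp_\E}$ immediately yields $f^\star(j_\star \delta) = 0$.

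Next, pull the above $\E$-triangle back along $f$ using Remark 2.7(2); this produces an $\E$-triangle $A' \to Q \xrightarrow{i'} Z$ with extension class $f^\star(j_\star \delta) = 0$, together with a morphism of $\E$-triangles whose middle component is some $k \colon Q \to P$ satisfying $f \circ i' = i \circ k$. Since the upper triangle realizes the zero $\E$-extension, the observation recorded after Definition 2.6 (additive realization sends $0$ to a split sequence) ensures that $i'$ is a retraction; fix a section $g \colon Z \to Q$. Then $f = f \circ i' \circ g = i \circ k \circ g$, and because $i \in \I$ and $\I$ is an ideal of $\C$, we conclude $f \in \I$. The main, and essentially only, delicate point is to identify the correct extension to pull back, namely $j_\star \delta$ rather than $\delta$ itself, so that the $\I^{\perp_\E}$-membership of $j$ and the ${}^{\perp_\E}(\I^{\perp_\E})$-membership of $f$ combine to force $f^\star(j_\star \delta) = 0$; once this vanishing is in hand, the splitting criterion and the factorization are formal.
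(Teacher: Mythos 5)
Your proposal is correct and follows essentially the same route as the paper's proof: the forward inclusion $\I\subseteq{}^{\perp_\E}(\I^{\perp_\E})$ is formal, and for the reverse inclusion you pull back the $\E$-triangle $A'\to X\xrightarrow{i}C$ (with class $j_\star\delta$, $j\in\I^{\perp_\E}$) coming from a special $\I$-precover along $f\in{}^{\perp_\E}(\I^{\perp_\E})$, observe that $f^\star j_\star\delta=0$ forces the pulled-back triangle to split, and use the resulting section to factor $f$ through $i$. This matches the paper's argument step for step, including the key point of pulling back $j_\star\delta$ rather than $\delta$.
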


\begin{proof}
Clearly, $\I\subseteq{^{\perp_\E}({\I^{\perp_\E}})}$. Now let $i' \in {^{\perp_\E}({\I^{\perp_\E}})}$ with
$i':X'\ra C$. For the object $C$, take a special $\I$-precover $i:X\ra C$. Then
there exists a morphism of $\E$-triangles
$$\xymatrix{
A \ar[r]\ar[d]_j & B \ar[r]\ar[d] & C\ar@{=}[d]\ar@{-->}[r]^\delta& \\
A' \ar[r] & X \ar[r]^{i} & C \ar@{-->}[r]^{j_\star\delta}& }$$
with $j\in\I^{\perp_\E}$. Furthermore, assume that $\s({{i'}^\star j_\star\delta})=[\xymatrix@C=0.5cm{
A' \ar[r]^x & Y \ar[r]^y & X'}]$. Then we also have  a morphism of $\E$-triangles
$$\xymatrix{
A' \ar[r]^x\ar@{=}[d] & Y \ar[r]^y\ar[d]^k & X'\ar[d]^{i'}\ar@{-->}[r]^{{i'}^\star j_\star\delta}& \\
A' \ar[r] & X \ar[r]^i & C \ar@{-->}[r]^{j_\star\delta}& .}$$
Since $i'\in {^{\perp_\E}({\I^{\perp_\E}})}$ and $j\in\I^{\perp_\E}$, we have ${{i'}^\star j_\star\delta}=0$,
and hence there exists $y':X'\ra Y$ such that $i'=i(ky')$. Thus we have that $i'\in \I$ and
${^{\perp_\E}({\I^{\perp_\E}})}\subseteq\I$. Therefore $\I={^{\perp_\E}({\I^{\perp_\E}})}$ and
$(\I,{\I^{\perp_\E}})$ is an $\E$-cotorsion pair.
\end{proof}

\begin{corollary}\label{3.9}
If $\I$ is a special precovering ideal, then $\I=\Ph(\I^{\star})$.
\end{corollary}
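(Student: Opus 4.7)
The plan is to prove Corollary 3.9 by combining Proposition 3.2, Proposition 3.4(2), and the cotorsion pair produced by Theorem 3.8. The inclusion $\I\subseteq\Ph(\I^{\star})$ is already built into the definition of $\I^{\star}$: by Proposition 3.2, $\I^{\star}$ is designed to be an additive subfunctor of $\E$ with $\I\subseteq\Ph(\I^{\star})$. So the entire substance of the corollary lies in the reverse inclusion $\Ph(\I^{\star})\subseteq\I$.

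For that direction, I would first apply Theorem 3.8 to rewrite the target: since $\I$ is special precovering, $(\I,\I^{\perp_\E})$ is an $\E$-cotorsion pair, so $\I={^{\perp_\E}(\I^{\perp_\E})}$. Thus it suffices to show that every $\varphi\in\Ph(\I^{\star})$ lies in ${^{\perp_\E}(\I^{\perp_\E})}$, i.e.\ that $\varphi^\star j_\star \delta = 0$ for every $j\in\I^{\perp_\E}$ and every $\E$-extension $\delta$.

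The key observation is the interaction between the two ``polar'' operations on $\I$. By Proposition 3.4(2), $\I^{\perp_\E}=\I^{\star}\mbox{-}\inj$, so any $j\in\I^{\perp_\E}$ annihilates all $\I^{\star}$-extensions under $j_\star$. On the other hand, $\varphi\in\Ph(\I^{\star})$ means exactly that $\varphi^\star\delta\in\I^{\star}(X,A)$ for every $\E$-extension $\delta\in\E(C,A)$. Applying the commutativity $\varphi^\star j_\star\delta = j_\star\varphi^\star\delta$ (from the commutative square defining $\E(c,a)$), we get $\varphi^\star j_\star\delta = j_\star(\varphi^\star\delta) = 0$, since $\varphi^\star\delta$ is an $\I^{\star}$-extension and $j$ is $\I^{\star}$-injective. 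Hence $\varphi\in{^{\perp_\E}(\I^{\perp_\E})}=\I$, which closes the argument.

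I do not anticipate a real obstacle here: the statement is essentially a bookkeeping consequence once Theorem 3.8 is in place. The only subtlety worth flagging is the consistent use of the identity $\varphi^\star j_\star = j_\star \varphi^\star$ coming from the biadditivity of $\E$, which is what converts ``$\varphi$ pulls $\E$ into $\I^{\star}$'' plus ``$j$ kills $\I^{\star}$ under push-forward'' into ``$j_\star\delta$ is killed by $\varphi^\star$''. The whole proof can therefore be written in just a few lines.
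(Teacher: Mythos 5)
Your proposal is correct and follows essentially the same route as the paper: the inclusion $\I\subseteq\Ph(\I^{\star})$ is immediate from the construction of $\I^{\star}$, and the reverse inclusion is obtained exactly as in the paper's proof by identifying $\I^{\perp_\E}$ with $\I^{\star}\mbox{-}\inj$ via Proposition \ref{3.4}(2), using the commutation $\varphi^\star j_\star\delta=j_\star\varphi^\star\delta$ to conclude $\varphi\in{^{\perp_\E}(\I^{\perp_\E})}$, and then invoking Theorem \ref{3.8} to get ${^{\perp_\E}(\I^{\perp_\E})}=\I$. No gaps; the argument matches the paper's line for line.
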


\begin{proof}
By definition, we have $\I\subseteq\Ph(\I^{\star})$. Now let $\varphi\in\Ph(\I^{\star})$, that is,
$\varphi$ is an $\I^{\star}$-phantom morphism.
Then for any $\E$-extension $\delta$, we have that $\varphi^\star\delta$ is an $\I^\star$-extension.
Let $j\in{\I^{\perp_\E}}$. Since ${\I^{\perp_\E}}=\I^\star\mbox{-}\inj$ by Proposition \ref{3.4}(2), we have
$j\in\I^\star\mbox{-}\inj$. So $\varphi^\star j_\star\delta=j_\star\varphi^\star\delta=0$ and
$\varphi\in{^{\perp_\E}({\I^{\perp_\E}})}$. Furthermore, $\I={^{\perp_\E}({\I^{\perp_\E}})}$ by Theorem \ref{3.8}.
So $\varphi\in\I$ and $\Ph(\I^{\star})\subseteq\I$.
\end{proof}

From Proposition \ref{3.4}(1), we have known that $(\Ph(\F),\F\mbox{-}\inj)$ is an $\E$-orthogonal pair.
In the rest of this section, we mainly study when it is an $\E$-cotorsion pair. To do it, we first introduce
the following

\begin{definition}\label{3.10}
{\rm \begin{enumerate}
\item[]
\item[(1)] An additive subfunctor $\F$ of $\E$ is said to \emph{have enough injective morphisms}
if for any $A\in\C$, there exists an $\F$-triangle
$\xymatrix@C=0.5cm{A \ar[r]^e & B \ar[r] & C\ar@{-->}[r]^\delta& },$
where $e$ is an $\F$-injective morphism.
\item[(2)] The additive subfunctor $\F$ of $\E$ is said to \emph{have enough special injective morphisms}
if for any $A\in\C$, there exists an $\F$-triangle as above together with a morphism of $\E$-triangles
    $$
\xymatrix{
   A \ar[r]^e\ar@{=}[d] & B \ar[r]\ar[d] & C\ar[d]^{\varphi}\ar@{-->}[r]^{\delta}& \\
    A \ar[r] & B' \ar[r]^i & C' \ar@{-->}[r]^{\delta'}&}
$$
with $\varphi$ an $\F$-phantom morphism.
\end{enumerate}
}
\end{definition}\label{3.11}

\begin{lemma}{
Let $(\C,\E,\s)$ be a triple satisfying (ET1), (ET2) and (ET3). If an $\F$-inflation $x:A\ra B$ factors
through an $\E$-inflation $g:A\ra Y$, then $g$ is an $\F$-inflation.
}
\end{lemma}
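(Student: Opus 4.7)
The plan is to use the factorization $x=hg$ together with axiom (ET3) to exhibit $\delta'$ (the $\E$-extension realized by $g$) as a pullback of the $\F$-extension $\delta$ realized by $x$, and then invoke the fact that $\F$, being a subfunctor of $\E$, is closed under pullbacks.

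More concretely, fix $\E$-triangles $\xymatrix@C=0.5cm{A\ar[r]^x&B\ar[r]^y&C\ar@{-->}[r]^{\delta}&}$ with $\delta\in\F(C,A)$ (this exists since $x$ is an $\F$-inflation) and $\xymatrix@C=0.5cm{A\ar[r]^g&Y\ar[r]^{y'}&Z\ar@{-->}[r]^{\delta'}&}$ (this exists since $g$ is an $\E$-inflation). First, I would observe that the factorization $x=hg$ gives a commutative square
$$\xymatrix{A\ar[r]^g\ar@{=}[d]&Y\ar[d]^h\\ A\ar[r]^x&B.}$$
Then I would apply (ET3) to this square (with the two $\E$-triangles displayed above) to obtain a morphism $c:Z\to C$ completing a morphism of $\E$-triangles
$$\xymatrix{A\ar[r]^g\ar@{=}[d]&Y\ar[r]^{y'}\ar[d]^h&Z\ar[d]^c\ar@{-->}[r]^{\delta'}&\\ A\ar[r]^x&B\ar[r]^y&C\ar@{-->}[r]^{\delta}&.}$$
By the very definition of a morphism of $\E$-extensions, the pair $(\id_A,c):\delta'\to\delta$ satisfies $(\id_A)_\star\delta'=c^\star\delta$, i.e., $\delta'=c^\star\delta$.

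Since $\F$ is an additive subfunctor of $\E$, it is stable under the pullback action $c^\star=\E(c,A)$ restricted to $\F$, so $\delta\in\F(C,A)$ forces $c^\star\delta\in\F(Z,A)$. Hence $\delta'\in\F(Z,A)$, which is exactly the statement that the $\E$-inflation $g$ is an $\F$-inflation.

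The only potentially delicate step is the appeal to (ET3), which requires the codomain triangle to be a genuine $\E$-triangle — it is, by hypothesis on $x$ — and the commutative square to be as set up above; all of this is in place, so no real obstacle arises. The argument uses (ET3) but neither $(\mathrm{ET3})^{\mathrm{op}}$ nor (ET4), matching the hypotheses of the lemma.
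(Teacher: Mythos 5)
Your proof is correct and follows essentially the same route as the paper: apply (ET3) to the commutative square coming from the factorization $x=hg$ to get a morphism $(\id_A,c):\delta'\to\delta$, deduce $\delta'=c^\star\delta$, and conclude from $\F$ being a subfunctor of $\E$. The only difference is notational (the paper calls your $c$ by the letter $h$ and leaves the map $Y\ra B$ unnamed).
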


\begin{proof}
Since $x:A\ra B$ is an $\F$-inflation, there exists an $\F$-triangle
$\xymatrix@C=0.5cm{A \ar[r]^x & B \ar[r] & C\ar@{-->}[r]^\delta& }$;
since $g:A\ra Y$ is an $\E$-inflation, there exists an $\E$-triangle
$\xymatrix@C=0.5cm{
A \ar[r]^g & Y \ar[r] & Z\ar@{-->}[r]^{\delta'}& }$ together with the following commutative diagram
$$\xymatrix{
A \ar[r]^g\ar@{=}[d] & Y \ar[r]\ar[d] & Z\ar@{-->}[r]^{\delta'}& \\
A \ar[r]^x & B \ar[r] & C \ar@{-->}[r]^{\delta}& .}$$
By (ET3), we get a morphism of $\E$-triangles
$$\xymatrix{
A \ar[r]^g\ar@{=}[d] & Y \ar[r]\ar[d] & Z\ar[d]^{h}\ar@{-->}[r]^{\delta'}& \\
A \ar[r]^x & B \ar[r] & C \ar@{-->}[r]^{\delta}&. }$$
In particular, we have $\delta'=h^\star\delta$. So $\delta'$ is an $\F$-extension and $g$ is an $\F$-inflation.
\end{proof}

\begin{proposition}\label{3.12}
Let $(\C,\E,\s)$ be a triple satisfying (ET1), (ET2) and (ET3). If $\F\subseteq\E$ is an additive
subfunctor having enough injective morphisms, then $\Ph(\F)={^{\perp_\E}(\F\mbox{-}\inj)}$.
\end{proposition}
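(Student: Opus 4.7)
The plan is to prove the two inclusions separately. The containment $\Ph(\F)\subseteq{^{\perp_\E}(\F\mbox{-}\inj)}$ is essentially free: by Proposition \ref{3.4}(1) the pair $(\Ph(\F),\F\mbox{-}\inj)$ is $\E$-orthogonal, so for every $\varphi\in\Ph(\F)$, every $i\in\F\mbox{-}\inj$ and every $\E$-extension $\delta$ one has $\varphi^\star i_\star\delta=0$, which by definition means $\varphi\in{^{\perp_\E}(\F\mbox{-}\inj)}$. So the content of the proposition is the reverse inclusion, which is where the hypothesis that $\F$ has enough injective morphisms is used.

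For the nontrivial direction, let $\varphi\in\C(X,C)$ lie in ${^{\perp_\E}(\F\mbox{-}\inj)}$ and fix an arbitrary $\delta\in\E(C,A)$. I want to show $\varphi^\star\delta\in\F(X,A)$. Realize the pullback as an $\E$-triangle $\xymatrix@C=0.4cm{A\ar[r]^{x'}&B'\ar[r]^{y'}&X\ar@{-->}[r]^{\varphi^\star\delta}&}$, so that by Lemma \ref{3.11} it suffices to exhibit an $\F$-inflation starting at $A$ that factors through the $\E$-inflation $x'$; then $x'$ itself will be an $\F$-inflation and $\varphi^\star\delta$ an $\F$-extension.

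The $\F$-inflation I will use comes from the hypothesis: pick an $\F$-triangle $\xymatrix@C=0.4cm{A\ar[r]^{e}&E\ar[r]&D\ar@{-->}[r]^{\eta}&}$ with $e$ an $\F$-injective morphism. The key computation is
\[
e_\star(\varphi^\star\delta)\;=\;\varphi^\star(e_\star\delta)\;=\;0,
\]
the second equality holding because $e\in\F\mbox{-}\inj$ and $\varphi\in{^{\perp_\E}(\F\mbox{-}\inj)}$. Now use (ET3) (together with the standard pushout remark) to build a morphism of $\E$-triangles realizing $(e,\id_X):\varphi^\star\delta\to e_\star\varphi^\star\delta$:
\[
\xymatrix{
A\ar[r]^{x'}\ar[d]_{e} & B'\ar[r]^{y'}\ar[d]^{b} & X\ar@{=}[d]\ar@{-->}[r]^{\varphi^\star\delta}&\\
E\ar[r]^{x''} & B''\ar[r]^{y''} & X\ar@{-->}[r]^{0}&.
}
\]
Since the bottom row realizes the split extension $0$, the additivity of $\s$ gives a retraction $r:B''\to E$ of $x''$. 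Composing, $r\circ b\circ x' = r\circ x''\circ e = e$, so $e$ factors through $x'$.

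Thus $e$, which is an $\F$-inflation, factors through the $\E$-inflation $x'$. By Lemma \ref{3.11}, $x'$ is an $\F$-inflation, equivalently $\varphi^\star\delta\in\F(X,A)$. Since $\delta$ was arbitrary, $\varphi\in\Ph(\F)$, finishing the proof. The only place where real care is needed is verifying that the splitting of the lower triangle forces $e$ to factor through $x'$; this is exactly the role of additivity of $\s$ and the remark following Definition \ref{3.3} in the paper.
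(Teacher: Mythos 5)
Your proof is correct and follows essentially the same route as the paper's: the easy inclusion from Proposition \ref{3.4}(1), and for the converse the pullback of an arbitrary extension along $\varphi$, the pushout along an $\F$-injective $\F$-inflation $e$, the computation $e_\star\varphi^\star\delta=\varphi^\star e_\star\delta=0$, the resulting factorization of $e$ through the pullback inflation via the retraction coming from the split bottom row, and the appeal to Lemma \ref{3.11}. No gaps.
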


\begin{proof}
By Proposition \ref{3.4}(1), we have $\Ph(\F)\subseteq{^{\perp_\E}(\F\mbox{-}\inj)}$.

Now let $f:X\ra C\in{^{\perp_\E}(\F\mbox{-}\inj)}$, and let
$\xymatrix@C=0.5cm{A \ar[r] & B \ar[r] & C\ar@{-->}[r]^\delta&}$ be any $\E$-triangle.
Then we have a morphism of $\E$-triangles
$$\xymatrix{
A \ar[r]^i\ar@{=}[d] & B' \ar[r]\ar[d] & X\ar[d]^{f}\ar@{-->}[r]^{f^\star\delta}& \\
A \ar[r] & B \ar[r] & C \ar@{-->}[r]^{\delta}& .}$$
For the object $A$, by assumption there exists an $\F$-injective $\F$-inflation $e:A\ra Y$.
Consider the following morphism of $\E$-triangles
$$\xymatrix{A \ar[r]^i\ar[d]^e & B' \ar[r]\ar[d]^g & X\ar@{=}[d]\ar@{-->}[r]^{f^\star\delta}& \\
Y \ar[r] & Z \ar[r] & X \ar@{-->}[r]^{e_\star f^\star\delta}& .}$$
Since $e_\star f^\star\delta =f^\star e_\star \delta=0$, that is, the $\E$-triangle
$\xymatrix@C=0.5cm{Y \ar[r] & Z \ar[r] & X \ar@{-->}[r]^{e_\star f^\star\delta}& }$ splits,
there exists $h:Z\ra Y$ such that $e=(hg)i$. By Lemma \ref{3.11}, $i$ is also an $\F$-inflation. So each
$\xymatrix@C=0.5cm{A \ar[r]^i & B' \ar[r] & X\ar@{-->}[r]^{f^\star\delta}&}$ induced by any $\E$-triangle
along $f$ is an $\F$-triangle, which implies that $f$ is an $\F$-phantom morphism. Thus
${^{\perp_\E}(\F\mbox{-}\inj)}\subseteq\Ph(\F)$, and therefore $\Ph(\F)={^{\perp_\E}(\F\mbox{-}\inj)}$.
\end{proof}

Note that a morphism $e:A\ra X$ in $\I$ is called a \emph{special $\I$-preenvelope} of $A$
if there exists a morphism of $\E$-triangles
$$\xymatrix{A \ar[r]^e\ar@{=}[d] & X \ar[r]\ar[d] & Y\ar[d]^{j}\ar@{-->}[r]^{\delta}& \\
A \ar[r] & B \ar[r] & C \ar@{-->}[r]^{\delta'}& }$$
with $j\in{^{\perp_\E}\I}$.

Now if $\F$ has enough special injective morphisms, then for any $A\in\C$, there exists an $\F$-triangle
$\xymatrix@C=0.5cm{A \ar[r]^e & X \ar[r] & Y\ar@{-->}[r]^{\delta}&}$
together with a morphism of $\E$-triangles
$$\xymatrix{
A \ar[r]^e\ar@{=}[d] & X \ar[r]\ar[d] & Y\ar[d]^{j}\ar@{-->}[r]^{\delta}& \\
A \ar[r] & B \ar[r]^i & C' \ar@{-->}[r]^{\delta'}& }$$
with $e\in\F\mbox{-}\inj$ and $j\in\Ph(\F)$. By Proposition \ref{3.12}, we have
$\Ph(\F)={^{\perp_\E}(\F\mbox{-}\inj)}$. So $j\in {^{\perp_\E}(\F\mbox{-}\inj)}$ and $e$ is a special
$\F$-injective preenvelope of $A$. This shows that $\F\mbox{-}\inj$ is a special preenveloping ideal.

As a dual of Theorem \ref{3.8}, we have the following

\begin{theorem}\label{3.13}
If $\J$ is a special preenveloping ideal of $\C$, then the orthogonal pair $({^{\perp_\E}\J}, \J)$
of ideals is an $\E$-cotorsion pair.
\end{theorem}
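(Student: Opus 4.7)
The plan is to dualize the proof of Theorem \ref{3.8}: pullbacks are replaced by pushouts, targets of morphisms by sources, and orthogonality relations are read in the opposite direction. The easy inclusion $\J\subseteq({^{\perp_\E}\J})^{\perp_\E}$ is immediate from the definitions: for every $j\in\J$, every $m\in{^{\perp_\E}\J}$, and every $\E$-extension $\eta$, the identity $m^\star j_\star\eta=0$ is exactly what membership in ${^{\perp_\E}\J}$ asserts.

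For the reverse inclusion, fix an arbitrary $j'\colon A\to Y'$ in $({^{\perp_\E}\J})^{\perp_\E}$ and aim to show $j'\in\J$. Apply the special preenveloping hypothesis to the source object $A$: there exists a special $\J$-preenvelope $j\colon A\to X$ of $A$, fitting into a morphism of $\E$-triangles
$$\xymatrix{A\ar[r]^{j}\ar@{=}[d]&X\ar[r]\ar[d]&Y\ar[d]^{k}\ar@{-->}[r]^{\delta}&\\
A\ar[r]&B\ar[r]&C\ar@{-->}[r]^{\delta'}&}$$
realized by the pair $(\id_A,k)$ with $k\in{^{\perp_\E}\J}$, which forces $\delta=k^\star\delta'$. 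Next, push $\delta$ out along $j'$ using the pushout construction for $\E$-triangles recorded in Remark 2.9(1) to obtain a commutative diagram
$$\xymatrix{A\ar[r]^{j}\ar[d]^{j'}&X\ar[r]\ar[d]^{g}&Y\ar@{=}[d]\ar@{-->}[r]^{\delta}&\\
Y'\ar[r]^{s}&W\ar[r]&Y\ar@{-->}[r]^{j'_\star\delta}&.}$$

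The central computation is
$$j'_\star\delta \;=\; j'_\star k^\star\delta' \;=\; k^\star j'_\star\delta' \;=\; 0,$$
where the middle equality is the bifunctoriality of $\E$ recorded at the start of Section~2.1 and the vanishing uses $k\in{^{\perp_\E}\J}$ together with $j'\in({^{\perp_\E}\J})^{\perp_\E}$. Hence the bottom $\E$-triangle splits, and by \cite[Remark 2.11]{NP} (recalled in the paragraph following the definition of additive realization) the morphism $s$ is a section; pick a retraction $r\colon W\to Y'$ with $rs=\id_{Y'}$. The commutativity $sj'=gj$ then yields $j'=rsj'=(rg)j$, a factorization through $j\in\J$, whence $j'\in\J$ since $\J$ is an ideal. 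The only delicate point is the bookkeeping one of dualizing the special preenvelope datum correctly (so that it is $\delta$, rather than $\delta'$, that is pushed out) and applying the bifunctorial commutation $j'_\star k^\star=k^\star j'_\star$ in the right order; beyond this, the argument is mechanically parallel to the precover case.
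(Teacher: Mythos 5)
Your proof is correct and is exactly the argument the paper intends: the paper states Theorem 3.13 without proof as ``the dual of Theorem 3.8,'' and your write-up is the faithful dualization (preenvelope of the source in place of precover of the target, pushout along $j'$ in place of pullback, and the vanishing $k^\star j'_\star\delta'=0$ read from the two orthogonality memberships). All the cited ingredients --- the pushout realization of $j'_\star\delta$, the splitting criterion for extensions realizing $0$, and closure of $\J$ under composition --- are available in the paper as you use them.
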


Note that if $\F$ has enough special injective morphisms, then $({^{\perp_\E}(\F\mbox{-}\inj)},\F\mbox{-}\inj)$
is an $\E$-cotorsion pair of ideals by Theorem \ref{3.13}. Because $\Ph(\F)={^{\perp_\E}(\F\mbox{-}\inj)}$ by
Proposition \ref{3.12}, we get the following

\begin{corollary}\label{3.14}
Let $(\C,\E,\s)$ be a triple satisfying (ET1), (ET2) and (ET3). If $\F$ has enough special injective morphisms,
then $(\Ph(\F),\F\mbox{-}\inj)$ is an $\E$-cotorsion pair of ideals; in particular, $\Ph(\F)^{\perp_\E}=\F\mbox{-}\inj$.
\end{corollary}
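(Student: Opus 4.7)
The proof essentially assembles three ingredients that are already in place, so my plan is to verify that each applies and then combine them.

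First, I would check that the hypothesis ``$\F$ has enough special injective morphisms'' makes $\F\mbox{-}\inj$ a special preenveloping ideal. This is exactly what the paragraph preceding Theorem \ref{3.13} carries out: given $A\in\C$, Definition \ref{3.10}(2) produces an $\F$-triangle $\xymatrix@C=0.5cm{A \ar[r]^e & X \ar[r] & Y\ar@{-->}[r]^{\delta}&}$ with $e\in\F\mbox{-}\inj$, together with a morphism of $\E$-triangles whose rightmost component $j$ is an $\F$-phantom morphism. Since ``enough special injective morphisms'' a fortiori implies ``enough injective morphisms'', Proposition \ref{3.12} gives $\Ph(\F)={^{\perp_\E}(\F\mbox{-}\inj)}$, so $j\in{^{\perp_\E}(\F\mbox{-}\inj)}$, which is precisely the condition for $e$ to be a special $\F\mbox{-}\inj$-preenvelope. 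Hence $\F\mbox{-}\inj$ is a special preenveloping ideal of $\C$.

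Next, I would invoke Theorem \ref{3.13} (the dual of Theorem \ref{3.8}) with $\J=\F\mbox{-}\inj$. This immediately yields that $\bigl({^{\perp_\E}(\F\mbox{-}\inj)},\,\F\mbox{-}\inj\bigr)$ is an $\E$-cotorsion pair of ideals, meaning in particular that $(\F\mbox{-}\inj)^{\perp_\E}$-side equalities from Definition \ref{3.7} hold: ${^{\perp_\E}(\F\mbox{-}\inj)}= {^{\perp_\E}(\F\mbox{-}\inj)}$ trivially, and $\F\mbox{-}\inj=({^{\perp_\E}(\F\mbox{-}\inj)})^{\perp_\E}$.

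Finally, I would substitute $\Ph(\F)$ for ${^{\perp_\E}(\F\mbox{-}\inj)}$ using Proposition \ref{3.12}. This rewrites the cotorsion pair as $(\Ph(\F),\F\mbox{-}\inj)$ and converts the right-hand equality into $\F\mbox{-}\inj=\Ph(\F)^{\perp_\E}$, which is the ``in particular'' clause.

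There is essentially no obstacle: every technical step (the equality $\Ph(\F)={^{\perp_\E}(\F\mbox{-}\inj)}$, the construction of a special preenvelope from the special-injective-morphism data, and the dualization of Theorem \ref{3.8}) has already been done in the preceding pages. The only thing worth being careful about is the direction of duality in Theorem \ref{3.13}: Theorem \ref{3.8} uses (ET3), so its dual implicitly uses (ET3)$^{\text{op}}$, which must be quietly folded into the hypothesis or noted as automatic in the extriangulated setting being used; since the hypotheses of the corollary only list (ET1)--(ET3), I would verify that Proposition \ref{3.12} and the preenvelope construction go through without invoking (ET3)$^{\text{op}}$, as they do indeed only use (ET3) applied to the diagrams written down.
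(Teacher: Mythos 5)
Your proof is correct and follows essentially the same route as the paper: establish that $\F\mbox{-}\inj$ is a special preenveloping ideal via the identification $\Ph(\F)={^{\perp_\E}(\F\mbox{-}\inj)}$ from Proposition \ref{3.12}, apply Theorem \ref{3.13}, and substitute back. Your closing remark about (ET3)$^{\mathrm{op}}$ is a sensible check, and indeed no issue arises since Theorems \ref{3.8} and \ref{3.13} are proved under (ET1) and (ET2) alone.
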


\section{The interplay between phantom ideals and cotorsion pairs}

From the previous section, we know that a special precovering ideal corresponds an $\E$-cotorsion pair, and that a phantom ideal
induced by a subfunctor also corresponds ones under a suitable assumption. In this section, we will investigate their interplay by
showing that a phantom ideal induced by a subfunctor is a  special precovering ideal under some suitable assumption, and vice versa.
Before doing it, we first give the following lemma, which simplifies the calculation process for checking phantom morphisms.

\begin{lemma}\label{4.1}
Let $(\C,\E,\s)$ be an extriangulated category. Consider an $\E$-triangle $\xymatrix@C=0.5cm{
K \ar[r] & P \ar[r]^p & C\ar@{-->}[r]^{\gamma}&}$ with $p$ an $\E$-projective morphism and a morphism $\varphi:X\ra C$.
Then the following statements are equivalent.
\begin{enumerate}
\item[(1)] $\varphi$ is an $\F$-phantom morphism.
\item[(2)] The induced $\E$-triangle $\xymatrix@C=0.5cm{
K \ar[r] & Y \ar[r] & X\ar@{-->}[r]^{\varphi^\star\gamma}&}$ is an $\F$-triangle.
\end{enumerate}
\end{lemma}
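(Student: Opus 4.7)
The implication $(1)\Rightarrow (2)$ is immediate from Definition 3.1 applied to the specific $\E$-extension $\gamma$: if $\varphi$ is an $\F$-phantom morphism, then $\varphi^\star\gamma\in\F(X,K)$, which is exactly the statement that the induced $\E$-triangle on $\varphi^\star\gamma$ is an $\F$-triangle.

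For $(2)\Rightarrow (1)$ the plan is to reduce every $\E$-extension of $C$ to a push-forward of $\gamma$. More precisely, I will show that for every $A\in\C$ and every $\delta\in\E(C,A)$ there exists a morphism $a\colon K\to A$ with $\delta = a_\star\gamma$. Granted this factorization, the functoriality identity
$$\varphi^\star\delta \;=\; \varphi^\star(a_\star\gamma) \;=\; a_\star(\varphi^\star\gamma),$$
recalled at the start of Section 2.1, combined with the hypothesis $\varphi^\star\gamma\in\F(X,K)$ and the closure of the additive subfunctor $\F$ under push-forwards, immediately yields $\varphi^\star\delta\in\F(X,A)$. Since $\delta$ is arbitrary, $\varphi$ is an $\F$-phantom morphism.

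The main work is therefore to produce the morphism $a$. The intuition is the familiar one from module theory: an $\E$-projective cover of $C$ makes the ``syzygy extension'' $\gamma$ universal, inducing a surjection $\C(K,A)\twoheadrightarrow\E(C,A)$ sending $a\mapsto a_\star\gamma$. To implement this in the extriangulated setting, I would pick a realization $A\to B\stackrel{y}{\to} C\dashrightarrow\delta$, pull it back along $p$ (in the sense of Remark 2.7(2)) to obtain an $\E$-triangle $A\to B'\to P$ realizing $p^\star\delta$ together with a morphism of $\E$-triangles to the original one. Since $p$ is $\E$-projective, $p^\star\delta=0$, so by the additivity of $\s$ the pullback $\E$-triangle splits; composing a section $P\to B'$ with the middle arrow $B'\to B$ produces $b\colon P\to B$ with $yb=p$. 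Feeding the resulting commutative right square into $(\mathrm{ET3})^{\op}$ (with identity on $C$) yields the desired morphism $a\colon K\to A$ extending the triple $(a,b,\id_C)$ to a morphism of $\E$-triangles, and the defining equation $a_\star\gamma = \id_C^{\star}\delta = \delta$ is built into this notion. This splitting-then-lifting argument is the only nontrivial step; the remainder is a routine application of the definitions.
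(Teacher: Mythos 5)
Your proof is correct and follows essentially the same route as the paper's: both arguments pull $\delta$ back along the $\E$-projective morphism $p$, use $p^\star\delta=0$ to split the resulting $\E$-triangle and produce $b\colon P\ra B$ with $yb=p$, then apply ${\rm (ET3)^{op}}$ to the square over $\id_C$ to obtain $a\colon K\ra A$ with $\delta=a_\star\gamma$, whence $\varphi^\star\delta=a_\star(\varphi^\star\gamma)\in\F(X,A)$. The only cosmetic difference is that the paper invokes \cite[Proposition 3.15]{NP} to set up the pullback diagram where you use the basic realization remark directly.
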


\begin{proof}
(1) $\Rightarrow$ (2) It is trivial.

(2) $\Rightarrow$ (1) Let $\xymatrix@C=0.5cm{
A \ar[r] & B \ar[r]^y & C\ar@{-->}[r]^{\delta}&}$ be any $\E$-triangle. By \cite[Proposition 3.15]{NP},
we have the following commutative diagram
$$\xymatrix{&K\ar@{=}[r]\ar[d]&K\ar[d]&\\
A\ar[r]\ar@{=}[d]&Q\ar[r]\ar[d]&P\ar[d]^p\ar@{-->}[r]^{\delta'}&\\
A \ar[r] & B \ar[r]^y \ar@{-->}[d]^{\gamma'}& C\ar@{-->}[r]^{\delta}\ar@{-->}[d]^{\gamma}&\\
&&&}$$
in $\C$ with $\delta'=p^\star\delta$ and $\gamma'=y^\star\gamma$. Since $p$ is an $\E$-projective morphism, we have $\delta'=p^\star\delta=0$,
and hence the middle row splits. Then there exists $g:P\ra B$ such that $p=yg$, that is, the following diagram
$$\xymatrix{
K \ar[r] & P \ar[r]^p\ar[d]^g & C\ar@{=}[d]\ar@{-->}[r]^{\gamma}& \\
A \ar[r] & B \ar[r]^y & C \ar@{-->}[r]^{\delta}& }$$
is commutative. By ${\rm (ET3)^{op}}$, there exists a morphism of $\E$-triangles
$$\xymatrix{
K \ar[r]\ar[d]^f & P \ar[r]^p\ar[d]^g & C\ar@{=}[d]\ar@{-->}[r]^{\gamma}& \\
A \ar[r] & B \ar[r]^y & C \ar@{-->}[r]^{\delta}& .}$$
In particular, we have $\delta=f_\star\gamma$. Thus $\varphi^\star\delta=\varphi^\star f_\star\gamma=f_\star\varphi^\star \gamma$.
By assumption, $\varphi^\star \gamma$ is an $\F$-extension, and hence $\varphi^\star\delta$ is also an $\F$-extension,
which shows that $\varphi:X\ra C$ is an $\F$-phantom morphism.
\end{proof}

Now we show that, under a suitable assumption, phantom ideals induced by additive subfunctors having enough injective morphisms
are special precovering ideals.

\begin{theorem}\label{4.2}
Let $(\C,\E,\s)$ be an extriangulated category with enough projective morphisms, and assume that $\F\subseteq\E$ is an
additive subfunctor having enough injective morphisms. Then $\Ph(\F)$ is a special precovering ideal.
\end{theorem}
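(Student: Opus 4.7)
The plan is, given $C\in\C$, to construct a special $\Ph(\F)$-precover of $C$ by combining the two enoughness hypotheses in a single pushout. I would first use the hypothesis that $\C$ has enough projective morphisms to fix an $\E$-triangle
$$
\xymatrix@C=0.5cm{K \ar[r]^k & P \ar[r]^p & C\ar@{-->}[r]^\gamma & }
$$
with $p\in\E\mbox{-}\proj$, and then use the hypothesis that $\F$ has enough injective morphisms to fix an $\F$-triangle
$$
\xymatrix@C=0.5cm{K \ar[r]^e & Y \ar[r] & Z\ar@{-->}[r]^\rho & }
$$
with $e\in \F\mbox{-}\inj$. Pushing $\gamma$ out along $e$ (Remark 2.9(1)) then produces a morphism of $\E$-triangles
$$
\xymatrix{
K \ar[r]^k \ar[d]_e & P \ar[r]^p \ar[d] & C \ar@{=}[d] \ar@{-->}[r]^\gamma & \\
Y \ar[r] & X \ar[r]^i & C \ar@{-->}[r]^{e_\star\gamma} &
}
$$
and my candidate is the deflation $i:X\to C$ together with this square.

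Two things remain to check. First, that $e\in\Ph(\F)^{\perp_\E}$: since $e\in\F\mbox{-}\inj$, Proposition \ref{3.4}(1) tells us the pair $(\Ph(\F),\F\mbox{-}\inj)$ is $\E$-orthogonal, which is literally the statement $\varphi^\star e_\star\delta = 0$ for every $\varphi\in\Ph(\F)$ and every $\E$-extension $\delta$. Second, that $i\in\Ph(\F)$: by Lemma \ref{4.1}, applied to the $\E$-projective $\E$-triangle with connecting $\gamma$, it is enough to verify $i^\star\gamma\in\F(X,K)$.

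For this last verification I would chase two long exact sequences. The exact sequence induced by applying $\C(X,-)$ and $\E(X,-)$ to the bottom $\E$-triangle has connecting map $\C(X,C)\to\E(X,Y)$ sending $h\mapsto h^\star(e_\star\gamma)$, and $i=i\circ\id_X$ lies in the image of $i_\ast:\C(X,X)\to\C(X,C)$, so $i^\star(e_\star\gamma)=0$; bifunctoriality rewrites this as $e_\star(i^\star\gamma)=0$. The analogous exact sequence attached to the $\F$-triangle $\xymatrix@C=0.4cm{K\ar[r]^e & Y\ar[r] & Z\ar@{-->}[r]^\rho &}$ contains the segment $\C(X,Z)\to\E(X,K)\xrightarrow{e_\ast}\E(X,Y)$ with connecting map $z\mapsto z^\star\rho$, so exactness yields $z:X\to Z$ with $i^\star\gamma = z^\star\rho$. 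Since $\rho\in\F(Z,K)$ and $\F$ is a subfunctor, $i^\star\gamma = z^\star\rho\in\F(X,K)$, as required. The main technical obstacle I anticipate is the careful handling of these two long exact sequences in the extriangulated setting; once they are in hand, the rest is a formal combination of bifunctoriality with the orthogonality already recorded in Proposition \ref{3.4}(1).
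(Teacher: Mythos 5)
Your construction is exactly the paper's: the same $\E$-triangle $K\to P\xrightarrow{p} C$ with $p\in\E\mbox{-}\proj$, the same choice of an $\F$-injective $\F$-inflation $e$ out of $K$, the same pushout along $e$, the same appeal to Proposition \ref{3.4}(1) for $e\in\Ph(\F)^{\perp_\E}$, and the same reduction via Lemma \ref{4.1} to checking $i^\star\gamma\in\F(X,K)$. Where you diverge is in that last verification. The paper pulls $e_\star\gamma$ back along $i$ to get a realization $K\to Z\to X$ of $i^\star\gamma$, uses \cite[Corollary 3.5]{NP} to factor $e$ through the inflation $K\to Z$, and then invokes Lemma \ref{3.11} to conclude that $K\to Z$ is an $\F$-inflation. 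You instead chase the two long exact sequences of \cite[Corollary 3.12]{NP}: exactness of $\C(X,X)\xrightarrow{i_\ast}\C(X,C)\to\E(X,Y)$ gives $i^\star(e_\star\gamma)=0$, hence $e_\star(i^\star\gamma)=0$; exactness of $\C(X,Z)\to\E(X,K)\xrightarrow{e_\star}\E(X,Y)$ then writes $i^\star\gamma=z^\star\rho$ with $\rho\in\F(Z,K)$, so $i^\star\gamma\in\F$ since $\F$ is a subfunctor closed under pullback. Both arguments are correct; yours trades the realization-level factorization lemma (Lemma \ref{3.11} plus \cite[Corollary 3.5]{NP}) for a purely extension-level computation, which is arguably cleaner since it never has to manipulate representing sequences, while the paper's version has the mild advantage of exhibiting the $\F$-inflation structure on the pulled-back triangle explicitly.
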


\begin{proof}
Let $C\in\C$. Then by assumption, there exists an $\E$-triangle $\xymatrix@C=0.5cm{
K \ar[r] & P \ar[r]^p & C\ar@{-->}[r]^{\gamma}&}$ with $p$ an $\E$-projective morphism.
For the object $K$, there exists an $\F$-injective $\F$-inflation $e:K\ra X$. Then we get a morphism of $\E$-triangles
$$\xymatrix{
K \ar[r]\ar[d]^e & P \ar[r]^p\ar[d] & C\ar@{=}[d]\ar@{-->}[r]^{\gamma}& \\
X \ar[r] & Y \ar[r]^\varphi & C \ar@{-->}[r]^{e_\star\gamma}& .}$$
In the following, we argue that $\varphi$ is a special $\Ph(\F)$-precover of $C$.

First of all, by Proposition \ref{3.4}(1), we have that $\F\mbox{-}\inj\subseteq\Ph(\F)^{\perp_\E}$
and $e\in \Ph(\F)^{\perp_\E}$. Moreover, consider the following diagram of morphisms of $\E$-triangles
$$\xymatrix{
K \ar[r]^i\ar@{=}[d] & Z \ar@{.>}[ldd]\ar[r]\ar[d] & Y\ar@{.>}[ldd]\ar[d]^{\varphi}\ar@{-->}[r]^{\varphi^\star\gamma}& \\
K \ar[r]\ar[d]_e & P \ar[r]\ar[d] & C\ar@{=}[d]\ar@{-->}[r]^{\gamma}& \\
X \ar[r] & Y \ar[r]^\varphi & C \ar@{-->}[r]^{e_\star\gamma}&. }$$
Since $\id_C\varphi=\varphi=\varphi\id_Y$, there exists $z:Z\ra C$ such that $e=e\id_K=zi$ by \cite[Corollary 3.5]{NP}.
Since $e$ is an $\F$-inflation, $i$ is also an $\F$-inflation by Lemma \ref{3.11}, and hence $\varphi^\star\gamma$ is an $\F$-extension.
By Lemma \ref{4.1}, $\varphi$ is an $\F$-phantom morphism and it is a special $\Ph(\F)$-precover of $C$.

Therefore we conclude that $\Ph(\F)$ is a special precovering ideal.
\end{proof}

\begin{lemma}\label{4.3}
Let $\mathcal{M}$ be a class of morphisms in $\C$. Consider a morphism of $\E$-triangles
$$
\xymatrix{
   A \ar[r]^a\ar[d]^f & B \ar[r]^b\ar[d]^g & E\ar[d]^h\ar@{-->}[r]^{\gamma}& \\
    X \ar[r]^x & Y \ar[r]^y & Z \ar@{-->}[r]^{\delta}& .}
$$
If $f\in\mathcal{M}^{\perp_\E}$ and $E$ is an injective object, then $g\in\mathcal{M}^{\perp_\E}$.
\end{lemma}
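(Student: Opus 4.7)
The plan is to fix $m\in\mathcal{M}$, say $m:U\to W$, and an arbitrary $\E$-extension $\rho\in\E(W,B)$, and then to prove directly that $m^\star g_\star\rho=0$. The guiding idea is that the injectivity of $E$ should force $\rho$ itself to factor as a pushforward $a_\star\eta$ for some $\eta\in\E(W,A)$; once this is in hand, the commutative square $ga=xf$ together with the hypothesis $f\in\mathcal{M}^{\perp_\E}$ finishes the argument at once.

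To produce such an $\eta$, I would realize $\rho$ by an $\E$-triangle $\xymatrix@C=0.4cm{B\ar[r]&B'\ar[r]&W\ar@{-->}[r]^\rho&}$ and apply (ET4) to it together with the top row $\xymatrix@C=0.4cm{A\ar[r]^a&B\ar[r]^b&E\ar@{-->}[r]^\gamma&}$ of the given morphism of $\E$-triangles. This furnishes an object $T$, an $\E$-extension $\xi\in\E(T,A)$ realized by some $\xymatrix@C=0.4cm{A\ar[r]&B'\ar[r]&T}$, and morphisms $s:E\to T$, $t:T\to W$ such that $\s(b_\star\rho)=[\xymatrix@C=0.4cm{E\ar[r]^s&T\ar[r]^t&W}]$, $s^\star\xi=\gamma$, and, most importantly, $a_\star\xi=t^\star\rho$.

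The injectivity of $E$ now plays its role: by Lemma \ref{2.11} we have $\E(W,E)=0$, so $b_\star\rho=0$, and the auxiliary $\E$-triangle $\xymatrix@C=0.4cm{E\ar[r]^s&T\ar[r]^t&W}$ realizes the split extension. By the remark following Definition 2.10, $t$ is therefore a retraction, so there exists $t':W\to T$ with $tt'=\id_W$. Applying $t'^\star$ to the identity $a_\star\xi=t^\star\rho$ and using the interchange $t'^\star a_\star=a_\star t'^\star$ gives
\[
a_\star(t'^\star\xi)=t'^\star t^\star\rho=(tt')^\star\rho=\rho,
\]
so setting $\eta:=t'^\star\xi\in\E(W,A)$ we obtain the desired factorization $\rho=a_\star\eta$.

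With this factorization secured, the final step is mechanical: using $ga=xf$,
\[
g_\star\rho=g_\star a_\star\eta=(ga)_\star\eta=(xf)_\star\eta=x_\star f_\star\eta,
\]
so that $m^\star g_\star\rho=x_\star(m^\star f_\star\eta)=0$ by $f\in\mathcal{M}^{\perp_\E}$. The only non-routine step is recognizing that, although the injectivity of $E$ sits in the third slot of $\gamma$ and so does not split $\gamma$ itself, it is nevertheless exactly what is needed to split the auxiliary triangle produced by (ET4); this splitting is the mechanism that lets $\rho$ be pulled through $a$.
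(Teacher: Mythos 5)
Your proof is correct, and its overall skeleton is the same as the paper's: both arguments reduce the claim to showing that every $\rho\in\E(W,B)$ lies in the image of $a_\star:\E(W,A)\to\E(W,B)$, after which $ga=xf$ and $f\in\mathcal{M}^{\perp_\E}$ give $m^\star g_\star\rho=x_\star(m^\star f_\star\eta)=0$. The one genuine difference is how that surjectivity is obtained. The paper simply cites the exactness of $\E(W,A)\xrightarrow{a_\star}\E(W,B)\to\E(W,E)$ from \cite[Corollary 3.12]{NP} and kills the third term by Lemma \ref{2.11}. You instead re-derive exactly the piece of exactness you need from the axiom (ET4): applying it to $\gamma$ and a realization of $\rho$ produces $\xi$ with $a_\star\xi=t^\star\rho$ and an auxiliary triangle realizing $b_\star\rho\in\E(W,E)=0$, whose splitting supplies a section $t'$ of $t$ and hence $\rho=a_\star(t'^\star\xi)$. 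Your version is self-contained and makes visible where injectivity of $E$ actually enters (it splits the (ET4)-triangle rather than $\gamma$ itself), at the cost of being longer; the paper's version is shorter but leans on the long exact sequence machinery of \cite{NP}. All the individual steps in your argument check out: the (ET4) compatibilities are quoted correctly, $(tt')^\star=t'^\star t^\star$ by contravariance in the first variable, and the final interchange $m^\star x_\star=x_\star m^\star$ is legitimate.
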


\begin{proof}
Since $\mathcal{M}^{\perp_\E}$ is an ideal and $f\in\mathcal{M}^{\perp_\E}$, we have $ga=xf\in\mathcal{M}^{\perp_\E}$.
Thus for any $m\in\mathcal{M}$, we have $m^\star g_\star a_\star=m^\star(ga)_\star=0$. On one hand, by Lemma \ref{2.11}
we have that an object $E\in\C$ is injective if and only if $\E(C,E)=0$ for any $C\in\C$. On the other hand,
by \cite[Corollary 3.12]{NP}, there exists an exact sequence
$$\xymatrix@C=0.5cm{\E(C,A) \ar[r]^{a_\star} & \E(C,B) \ar[r] & \E(C,E) }.$$
Thus $a_\star$ is epic and $m^\star g_\star=0$, which shows that $g\in\mathcal{M}^{\perp_\E}$.
\end{proof}

\begin{theorem}\label{4.4}
Let $(\C,\E,\s)$ be an extriangulated category with enough injective objects. If $\I$ is a special precovering ideal,
then $\I^{\perp_\E}$ is a special preenveloping ideal.
\end{theorem}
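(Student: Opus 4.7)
Given any $A\in\C$, my plan is to construct the desired special $\I^{\perp_\E}$-preenvelope as the left vertical of a pullback square built from an injective embedding of $A$ and a special $\I$-precover of the third term. Concretely, I would first use the hypothesis on enough injective objects to pick an $\E$-triangle $\xymatrix@C=0.5cm{A\ar[r]^{e_0}&E\ar[r]^{p}&C\ar@{-->}[r]^{\delta}&}$ with $E$ injective, and then a special $\I$-precover $i:X\to C$ of $C$. Pulling back $\delta$ along $i$ yields the morphism of $\E$-triangles
$$\xymatrix{A \ar[r]^{e}\ar@{=}[d] & X' \ar[r]^{q}\ar[d]^{b} & X\ar[d]^{i}\ar@{-->}[r]^{i^\star\delta}&\\
A \ar[r]^{e_0} & E \ar[r]^{p} & C \ar@{-->}[r]^{\delta}&.}$$
Theorem~\ref{3.8} identifies $(\I,\I^{\perp_\E})$ as an $\E$-cotorsion pair, so the right vertical $i$ lies in $\I={^{\perp_\E}(\I^{\perp_\E})}$ automatically. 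Thus the diagram is already in the shape prescribed by the dual of Definition~\ref{3.5}(2), and the whole problem reduces to verifying that $e\in\I^{\perp_\E}$.

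For this verification I would fix $m:Z'\to Z$ in $\I$ and $\eta\in\E(Z,A)$ and show $m^\star e_\star\eta=e_\star m^\star\eta=0$ by chaining two instances of the Yoneda-type exact sequence of \cite[Corollary~3.12]{NP}. Applied to the lower $\E$-triangle, together with $\E(Z,E)=0$ (Lemma~\ref{2.11}, since $E$ is injective), the connecting map $\Hom(Z,C)\to\E(Z,A)$, $g\mapsto g^\star\delta$, must be surjective; hence $\eta=g^\star\delta$ for some $g:Z\to C$. Then $m^\star\eta=(gm)^\star\delta$, and since $\I$ is an ideal containing $m$ we have $gm\in\I$; by Proposition~\ref{3.6} the special $\I$-precover $i$ is in particular an $\I$-precover, so $gm=ih$ for some $h:Z'\to X$. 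Consequently $m^\star\eta=h^\star(i^\star\delta)$, which lies in the image of the connecting map $\Hom(Z',X)\to\E(Z',A)$ coming from the upper $\E$-triangle; by exactness at $\E(Z',A)$ this forces $e_\star(m^\star\eta)=0$, as required.

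The main conceptual obstacle I anticipate is precisely this two-step linkage between the upper and lower triangles: first trading a general extension $\eta\in\E(Z,A)$ for a morphism $g:Z\to C$ by exploiting the injectivity of $E$, and then trading $gm\in\I$ for a morphism $h:Z'\to X$ by exploiting the $\I$-precovering property of $i$. The remaining ingredients---closure of ideals under composition, the pullback-of-$\E$-triangles construction from Section~2, and the cotorsion pair identification of Theorem~\ref{3.8}---enter only formally. Once the Yoneda-type exact sequence is applied in both the upper and lower positions, the argument parallels the classical construction of a special preenvelope from a special precover and an injective resolution in the abelian setting.
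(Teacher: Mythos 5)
Your proposal is correct, and it builds the candidate preenvelope exactly as the paper does: take an $\E$-triangle $A\to E\to C$ with $E$ injective, take a special $\I$-precover $i:X\to C$, pull back, observe that the right vertical $i$ lies in $\I\subseteq{}^{\perp_\E}(\I^{\perp_\E})$, and reduce everything to showing $e\in\I^{\perp_\E}$. Where you genuinely diverge is in that last verification. The paper goes back to the defining diagram of the \emph{special} precover, which supplies a morphism $g\in\I^{\perp_\E}$, assembles a three-dimensional commutative diagram via the (ET4)-type axioms, and then invokes Lemma~\ref{4.3} to push membership in $\I^{\perp_\E}$ from $g$ to the middle map $k$, concluding $a=ke'\in\I^{\perp_\E}$. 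You instead compute directly: injectivity of $E$ makes the connecting map $\C(Z,C)\to\E(Z,A)$ surjective, so every $\eta\in\E(Z,A)$ is $g^\star\delta$; then $m^\star\eta=(gm)^\star\delta$ with $gm\in\I$, the precover property gives $gm=ih$, and $m^\star\eta=h^\star(i^\star\delta)$ dies under $e_\star$ by exactness of $\C(Z',X)\to\E(Z',A)\to\E(Z',X')$ for the pulled-back triangle. All steps check out (the commutation $m^\star e_\star=e_\star m^\star$ is the bifunctoriality recorded in Section~2, and both exact sequences are instances of \cite[Corollary~3.12]{NP}). What your route buys: it avoids the cube entirely, and it uses only that $i$ is an $\I$-precover (via Proposition~\ref{3.6}), never the morphism $j\in\I^{\perp_\E}$ witnessing specialness, so it formally establishes the stronger statement that $\I^{\perp_\E}$ is special preenveloping whenever $\I$ is merely a precovering ideal and $\C$ has enough injectives. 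What the paper's route buys is Lemma~\ref{4.3} itself, which is reused later (in Theorem~\ref{4.6}(2) and Proposition~\ref{5.3}), and an explicit factorization $a=ke'$ of the preenvelope that the later arguments also exploit.
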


\begin{proof}
Let $A\in \C$ and $\xymatrix@C=0.5cm{
A\ar[r] & E \ar[r]^c & C\ar@{-->}[r]^{\delta}&}$ be an $\E$-triangle with $E$ an injective object.
For the object $C$, there exists a special $\I$-precover $x:X\ra C$. Then we have a morphism of $\E$-triangles
$$\xymatrix{
A \ar[r]^a\ar@{=}[d] & B \ar[r]\ar[d] & X\ar[d]^{x}\ar@{-->}[r]^{x^\star\delta}& \\
A \ar[r] & E \ar[r]^c & C \ar@{-->}[r]^{\delta}&. }$$
In the following, we argue that $a$ is a special $\I^{\perp_\E}$-preenvelope of $A$.

Since $x\in\I\subseteq{^{\perp_\E}(\I^{\perp_\E})}$, it suffices to show that $a\in\I^{\perp_\E}$.
Assume that the special $\I$-precover $x:X\ra C$ comes from the following morphism of $\E$-triangles
$$\xymatrix{
Y \ar[r]^y\ar[d]_g & Z \ar[r]^z\ar[d]_h & C\ar@{=}[d]\ar@{-->}[r]^{\gamma}& \\
W \ar[r]^w & X \ar[r]^x & C \ar@{-->}[r]^{g_\star\gamma}& }$$
with $g\in \I^{\perp_\E}$. Consider the following commutative diagram
$$\xymatrix{
&&&Y\ar@{=}[rr]\ar[ld]_g\ar'[d][dd]&&Y\ar[ld]_g\ar[dd]&\\
&&W\ar@{=}[rr]\ar[dd]&&W\ar[dd]&&\\
& A \ar'[r][rr]^{e'} \ar@{=}[ld]\ar@{=}'[d][dd]
&  & F \ar[ld]_k\ar'[d][dd] \ar'[r][rr] &&Z\ar[ld]_h\ar@{-->}[rr]^{~~ z^\star\delta=h^\star x^\star\delta}\ar'[d][dd]^z && \\
A\ar[rr]^{~~~a}\ar@{=}[dd]&&B\ar[rr]\ar[dd]&&X\ar@{-->}[rr]^{x^\star\delta}\ar[dd]_{x}&&&\\
& A \ar@{=}[ld]\ar'[r][rr]
&  & E  \ar@{-->}'[d][dd]^{c^\star\gamma}\ar@{=}[ld] \ar'[r][rr]^{} &&C\ar@{-->}'[d][dd]^\gamma\ar@{=}[ld]\ar@{-->}[rr]^\delta && \\
 A \ar[rr]^{}
&& E\ar@{-->}[dd]_{c^\star g_\star\gamma=g_\star c^\star \gamma} \ar[rr]^{~~~~c}
&&C\ar@{-->}[dd]^{g_\star \gamma}\ar@{-->}[rr]^{~~~~~~\delta}&&&\\
&&&&&.&\\ &&&&&&}$$
By Lemma \ref{4.3} and the vertical plane in the middle of the above diagram, we have
$k\in\I^{\perp_\E}$, and hence $a=ke'\in\I^{\perp_\E}$, as desired.
\end{proof}

Following the above theorem and its dual, we get a morphism version of the Salce's lemma as follows.

\vspace{0.2cm}

{\bf Salce's Lemma.}   Let $(\C,\E,\s)$ be an extriangulated category with enough projective and injective objects.
If $(\I,\J)$ is an $\E$-cotorsion pair of ideals, then
$\I$ is a special precovering ideal if and only if $\J$ is a special preenveloping ideal.

\vspace{0.2cm}

Now we give our main result as follows. Here, an $\E$-cotorsion pair $(\I,\J)$ of ideals is called \emph{complete} if
$\I$ is a special precovering ideal and $\J$ is a special preenveloping ideal.

\begin{theorem}\label{4.5}
Let  $(\C,\E,\s)$ be an extriangulated category. Then we have the following implications.
$$\xymatrix{
{\begin{tabular}{|c|}
  \hline
      \mbox{There exists an additive subfunctor} \\ \mbox{$\F\subseteq\E$ having enough (special)}\\\mbox{injective morphisms and $\I=\Ph(\F)$} \\
  \hline
\end{tabular}}\ar@{=>}[rrrr]^{\rm(I)}_{\tiny ~~~~~~~~~~ \C\mbox{ has enough projective morphisms}}&&&&{\begin{tabular}{|c|}
  \hline
        \mbox{$\I$ is a special}\\\mbox{precovering ideal} \\
  \hline
\end{tabular}}\ar@{=>}[d]_{\rm(II)}^{\tiny \C \mbox{ has enough injective objects}}\\
{\begin{tabular}{|c|}
  \hline
      \mbox{The additive subfunctor $\I^\star\subseteq\E$} \\ \mbox{having enough special injective}\\\mbox{morphisms and $\I=\Ph(\I^\star)$} \\
  \hline
\end{tabular}}\ar@{=>}[u]^{\rm(IV)}&&&&{\begin{tabular}{|c|}
  \hline
      \mbox{$(\I,\I^{\perp_\E})$ is a complete} \\ \mbox{$\E$-cotorsion pair}\\
  \hline
\end{tabular}}\ar@{=>}[llll]_{\rm(III)}}$$
\end{theorem}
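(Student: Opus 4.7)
My plan is to establish the four implications (I)--(IV) separately, each by invoking material already developed in the excerpt, and to treat (III) as the only non-formal step.

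Implication (I) should follow immediately from Theorem \ref{4.2}: the hypothesis supplies an additive subfunctor $\F\subseteq\E$ having enough injective morphisms with $\I=\Ph(\F)$, and under the standing assumption that $\C$ has enough projective morphisms that theorem concludes directly that $\Ph(\F)=\I$ is a special precovering ideal. Implication (II) I plan to split into two steps: Theorem \ref{3.8} converts a special precovering ideal $\I$ into the $\E$-cotorsion pair $(\I,\I^{\perp_\E})$, and Theorem \ref{4.4}, under the assumption that $\C$ has enough injective objects, upgrades $\I^{\perp_\E}$ to a special preenveloping ideal, thereby making the pair complete.

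The substantive step is (III). Assuming $(\I,\I^{\perp_\E})$ is a complete $\E$-cotorsion pair, $\I$ is in particular special precovering, so Corollary \ref{3.9} yields $\I=\Ph(\I^\star)$ immediately. To check that $\I^\star$ has enough special injective morphisms, I would fix $A\in\C$ and use that $\I^{\perp_\E}$ is special preenveloping to extract an $\E$-triangle $\xymatrix@C=0.5cm{A \ar[r]^e & X \ar[r] & Y\ar@{-->}[r]^{\delta}&}$ together with a morphism of $\E$-triangles
$$\xymatrix{A \ar[r]^e\ar@{=}[d] & X \ar[r]\ar[d] & Y\ar[d]^{j}\ar@{-->}[r]^{\delta}& \\ A \ar[r] & B \ar[r] & C \ar@{-->}[r]^{\delta'}& }$$
in which $e\in\I^{\perp_\E}$ and $j\in{^{\perp_\E}(\I^{\perp_\E})}=\I$. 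Then Proposition \ref{3.4}(2) rewrites $e\in\I^{\perp_\E}$ as $e\in\I^\star\mbox{-}\inj$; Corollary \ref{3.9} rewrites $j\in\I$ as $j\in\Ph(\I^\star)$; and the compatibility $\delta=j^\star\delta'$, read off from the morphism of $\E$-triangles, shows $\delta\in\I^\star(Y,A)$, so the top row is genuinely an $\I^\star$-triangle whose left arrow is $\I^\star$-injective. Setting $\varphi:=j$ then matches Definition \ref{3.10}(2) verbatim.

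Implication (IV) is a formality: if $\I^\star$ already has enough special injective morphisms and $\I=\Ph(\I^\star)$, one can simply take $\F:=\I^\star$ in the top-left hypothesis, since enough special injective morphisms is \emph{a fortiori} enough (special) injective morphisms. The main obstacle is therefore (III), whose principal difficulty is to identify the triangle-level data provided by a special $\I^{\perp_\E}$-preenvelope with the definitional data required for $\I^\star$ to have enough special injective morphisms; once the equalities $\I^{\perp_\E}=\I^\star\mbox{-}\inj$ and $\I=\Ph(\I^\star)$ are in hand, the match is forced by simply relabeling the vertical arrow $j$ as the phantom morphism $\varphi$.
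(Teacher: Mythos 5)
Your proposal is correct and follows essentially the same route as the paper: (I) via Theorem \ref{4.2}, (II) via Theorems \ref{3.8} and \ref{4.4}, (IV) trivially, and (III) by unpacking the special preenveloping property of $\I^{\perp_\E}=\I^\star\mbox{-}\inj$ together with Corollary \ref{3.9} and the compatibility $\delta=j^\star\delta'$. The only cosmetic difference is in (III), where you identify $j$ as a phantom morphism via $j\in{^{\perp_\E}(\I^{\perp_\E})}=\I=\Ph(\I^\star)$, whereas the paper first notes that $\I^\star$ has enough injective morphisms and then invokes Proposition \ref{3.12}; both yield the same conclusion.
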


\begin{proof}
(I) It follows directly from Theorem \ref{4.2}.

(II) Since $\I$ is a special precovering ideal, $(\I,\I^{\perp_\E})$ is an $\E$-cotorsion pair by Theorem \ref{3.8}.
Moreover, since $\C$ has enough injective objects, $\I^{\perp_\E}$ is a special preenveloping ideal by Theorem \ref{4.4}.
Thus $(\I,\I^{\perp_\E})$ is a complete $\E$-cotorsion pair.

(III) First, since $\I$ is a special precovering ideal, we have $\I=\Ph(\I^\star)$ by Corollary \ref{3.9}. Moreover,
we have $\I^{\perp_\E}=\I^\star\mbox{-}\inj$ by Proposition \ref{3.4}(2).
So by assumption, any object in $\C$ admits a special $\I^\star$-injective preenvelope, that is, for any $A\in\C$, there exists
an $\I^\star$-injective morphism $e:A\ra X$ that comes from a morphism of $\E$-triangles
$$\xymatrix{A \ar[r]^e\ar@{=}[d] & X \ar[r]\ar[d] & Y\ar[d]^{j}\ar@{-->}[r]^{j^\star\delta}& \\
A \ar[r] & B \ar[r] & C \ar@{-->}[r]^{\delta}& }$$ with $j\in {^{\perp_\E}(\I^\star\mbox{-}\inj)}$.
This, on the other hand, shows that $\I^\star$ has enough injective morphisms. So by Proposition \ref{3.12}, we have that
${^{\perp_\E}(\I^\star\mbox{-}\inj)}=\Ph(\I^\star)$ and $j\in\Ph(\I^\star)$. It follows that $\I^\star$ has enough special injective morphisms.

(IV) It is trivial.
\end{proof}

By Theorem \ref{4.5}, we have that if $(\C,\E,\s)$ is an extriangulated category with enough injective objects and projective morphisms,
then we get the following bijective correspondence.
$$(\bigstar) \ \
\xymatrix@C=1.5cm{
{\begin{tabular}{|c|}
\hline
\mbox{all special precovering ideals} \\ \mbox{of $\C$}\\
\hline
\end{tabular}}\ar@<+4pt>[r]^{\!\!\!\!\!\!\!\!\!\!\!\!\!\!\!(-)^\star}&\ar@<+4pt>[l]^{\!\!\!\!\!\!\!\!\!\!\!\!\!\!\!\!\Ph(-)}{\begin{tabular}{|c|}
\hline
\mbox{all additive subfunctors of $\E$ having}\\\mbox{enough special injective morphisms} \\
\hline
\end{tabular}}}$$

Combining it with the Salce's lemma, we further get the following

\begin{theorem}
Let $(\C,\E,\s)$ be an extriangulated category with enough injective objects and projective objects.
Then we have the following implications for an $\E$-cotorsion pair $(\I,\J)$ of ideals.
$$\xymatrix@=0.1cm{
{\begin{tabular}{|c|}
  \hline
\mbox{There exists an additive} \\ \mbox{subfunctor $\F\subseteq\E$ having}\\\mbox{enough special injective}\\{morphisms and $\I=\Ph(\F)$} \\
  \hline
\end{tabular}} \ar@{<=>}[dr]&&\ar@{<=>}[dl] {\begin{tabular}{|c|}
  \hline
\mbox{There exists an additive} \\ \mbox{subfunctor $\F\subseteq\E$ having}\\\mbox{enough special projective}\\\mbox{morphisms and }$\I=\F\mbox{-}\proj$ \\
  \hline
\end{tabular}} \\
 & {\begin{tabular}{|c|}
  \hline
        \mbox{The ideal $\I$ is a special}\\\mbox{precovering ideal} \\
  \hline
\end{tabular}} \ar@{<=>}[dr]\ar@{<=>}[dl]&   \\
  {\begin{tabular}{|c|}
  \hline
\mbox{The additive subfunctor} \\$\I^\star\subseteq\E$ \mbox{ having enough special}\\\mbox{injective morphisms and}\\$\I=\Ph(\I^\star)$ \\
  \hline
\end{tabular}} && {\begin{tabular}{|c|}
  \hline
\mbox{The additive subfunctor} \\ $\J_\star\subseteq\E$\mbox{ having enough special}\\\mbox{projective morphisms and}\\$\I=\J_\star\mbox{-}\proj$ \\
  \hline
\end{tabular}} \\
&  {\begin{tabular}{|c|}
  \hline
      \mbox{$(\I,\J)$ is a complete} \\ \mbox{$\E$-cotorsion pair}\\
  \hline
\end{tabular}}\ar@{<=>}[uu]\ar@{<=>}[dd] &\\
 {\begin{tabular}{|c|}
  \hline
\mbox{There exists an additive subfunctor} \\ \mbox{$\F\subseteq\E$ having enough special}\\\mbox{projective morphisms and} \\$\J=\Coph(\F)$\\
  \hline
\end{tabular}} \ar@{<=>}[dr]&&\ar@{<=>}[dl]{\begin{tabular}{|c|}
  \hline
\mbox{There exists an additive} \\ \mbox{subfunctor $\F\subseteq\E$ having}\\\mbox{enough special injective} \\ \mbox{morphisms and } $\J=\F\mbox{-}\inj$\\
  \hline
\end{tabular}}  \\
 & {\begin{tabular}{|c|}
  \hline
        \mbox{The ideal $\J$ is a special}\\\mbox{preenveloping ideal} \\
  \hline
\end{tabular}} \ar@{<=>}[dr]\ar@{<=>}[dl]&   \\
  {\begin{tabular}{|c|}
  \hline
      \mbox{The additive subfunctor} \\$\J_\star\subseteq\E$ \mbox{ having enough special}\\\mbox{projective morphisms and}\\$\J=\Coph(\J_\star)$ \\
  \hline
\end{tabular}} && {\begin{tabular}{|c|}
  \hline
      \mbox{The additive subfunctor} \\$\I^\star\subseteq\E$ \mbox{ having enough special}\\\mbox{injective morphisms and}\\$\J=\I^\star\mbox{-}\inj$ \\
  \hline
\end{tabular}}
}$$
\end{theorem}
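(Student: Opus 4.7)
The strategy is to assemble the diagram from Theorem~\ref{4.5}, its dual, Salce's Lemma, and a short cotorsion-pair computation. The central node ``$(\I,\J)$ is a complete $\E$-cotorsion pair'' serves as the hub: the top block of five conditions characterizes when $\I$ is a special precovering ideal, the bottom block when $\J$ is a special preenveloping ideal, and Salce's Lemma bridges the two halves.

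For the top block, I would first apply Theorem~\ref{4.5} to $\I$ directly. The assumption of enough projective objects yields enough projective morphisms (any $\E$-deflation from a projective object is a projective morphism), so the hypotheses of Theorem~\ref{4.5} are met, and its cycle (I)--(IV) already gives the equivalence of ``$\I$ is a special precovering ideal'', ``some $\F$ has enough special injective morphisms with $\I=\Ph(\F)$'', ``$\I^\star$ has enough special injective morphisms with $\I=\Ph(\I^\star)$'', and ``$(\I,\I^{\perp_\E})$ is complete''. To bring in the two right-hand boxes involving $\F\mbox{-}\proj$ and $\J_\star\mbox{-}\proj$: when $(\I,\J)$ is a cotorsion pair, so $\I={^{\perp_\E}\J}$, the dual of Proposition~\ref{3.4}(2) gives ${^{\perp_\E}\J}=\J_\star\mbox{-}\proj$, whence $\I=\J_\star\mbox{-}\proj$, while the dual of implication (III) in Theorem~\ref{4.5} (using enough injective objects) shows that $\J_\star$ has enough special projective morphisms. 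Conversely, if $\I=\F\mbox{-}\proj$ with $\F$ having enough special projective morphisms, the dual of Theorem~\ref{4.2} produces special $\F\mbox{-}\proj$-precovers and so $\I$ is a special precovering ideal; the passage from ``some $\F$'' to the canonical $\J_\star$ is automatic once the cotorsion pair is in hand.

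The bottom block is treated symmetrically: apply Theorem~\ref{4.5} to $\J$ in the dual extriangulated category, in which $\Ph\leftrightarrow\Coph$, $\F\mbox{-}\inj\leftrightarrow\F\mbox{-}\proj$, and ``precovering'' $\leftrightarrow$ ``preenveloping'' are interchanged. The argument of the previous paragraph then gives the analogous five-way equivalence around ``$\J$ is a special preenveloping ideal'', using the canonical choice $\I^\star$ in place of $\J_\star$ and $\Coph$ in place of $\Ph$. Salce's Lemma (stated just before the present theorem) supplies the horizontal bridge at the center: under enough projective and injective objects, ``$\I$ is special precovering'' $\Leftrightarrow$ ``$(\I,\J)$ is a complete $\E$-cotorsion pair'' $\Leftrightarrow$ ``$\J$ is special preenveloping''.

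The main obstacle I anticipate is bookkeeping rather than a genuinely new ingredient: one must verify that the dual versions of Proposition~\ref{3.4}(2), Theorem~\ref{4.2}, and Theorem~\ref{4.5} hold under the hypotheses already in force, with ``enough injective objects'' playing the role that ``enough projective morphisms'' played in the original direction, and one must confirm that the canonical subfunctors $\I^\star$ and $\J_\star$ are indeed the correct choices in each of the four corner boxes. No novel computation beyond those in Sections~3 and~4 should be required.
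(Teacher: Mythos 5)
Your proposal is correct and follows essentially the same route as the paper, which offers no written proof beyond the remark that the theorem follows by ``combining'' the correspondence $(\bigstar)$ of Theorem \ref{4.5} with the morphism version of Salce's Lemma; your assembly of the top block from Theorem \ref{4.5}, the bottom block from its dual, the identification $\I={^{\perp_\E}\J}=\J_\star\mbox{-}\proj$ via the dual of Proposition \ref{3.4}(2), and the central bridge via Salce's Lemma is exactly the intended argument. Your observation that enough projective objects yields enough projective morphisms (since any morphism out of a projective object is $\E$-projective) correctly reconciles the hypotheses here with those of Theorem \ref{4.5}.
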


The above theorem shows that if $(\C,\E,\s)$ is an extriangulated category with enough injective objects and projective objects,
then we have the following bijective correspondences.
$$(\bigstar\bigstar) \ \
\xymatrix@=1.5cm{
{\begin{tabular}{|c|}
  \hline
      \mbox{all special precovering ideals} \\ \mbox{of $\C$}\\
  \hline
\end{tabular}}\ar@<+4pt>[r]^{\!\!\!\!\!\!\!\!\!\!\!\!\!\!\!(-)^\star}\ar@<+4pt>[d]^{(-)^{\perp_\E}}&
\ar@<+4pt>[d]^G\ar@<+4pt>[l]^{\!\!\!\!\!\!\!\!\!\!\!\!\!\!\!\!\Ph(-)}\ar[ld]_{\!\!\!\!\!\!\!\!\!\!\!\!\!\!\!\!\!\!\!\!\!\!\!\!\!\!\!\!\!\!\!\!\!\!\!\!\!
\!\!\!\!\!\!\!\!\!\!\!\!\!\!\!\!\!\!\!\!\!\!\!\!\!\!\!\!\!\!\!\!\!\!\!\!\!\!\!\!\!\!\!(-)\mbox{-}\proj}{\begin{tabular}{|c|}
  \hline
        \mbox{all additive subfunctors of $\E$ having}\\\mbox{enough special injective morphisms} \\
  \hline
\end{tabular}}\\
{\begin{tabular}{|c|}
  \hline
      \mbox{all special preenveloping ideals} \\ \mbox{of $\C$}\\
  \hline
\end{tabular}}\ar@<+4pt>[r]^{\!\!\!\!\!\!\!\!\!\!\!\!\!\!\!(-)_\star}\ar@<+4pt>[u]^{^{\perp_\E}(-)}&\ar@<+4pt>[u]^F\ar@<+4pt>[l]^{\!\!\!\!\!\!\!\!\!\!\!\!\!\!\!\!\Coph(-)}
\ar[lu]_{~~~~~~~~~~~~~~~~(-)\mbox{-}\inj}{\begin{tabular}{|c|}
  \hline
        \mbox{all additive subfunctors of $\E$ having}\\\mbox{enough special projective morphisms} \\
  \hline
\end{tabular}}}
$$
Here $F={(-)^\star}\circ{^{\perp_\E}(-)}\circ\Coph(-)$ and $G={(-)_\star}\circ{{(-)^{\perp_\E}}}\circ{\Ph(-)}$.

We end this section with some applications of the obtained results above.

\begin{theorem}\label{4.6}
Let  $(\C,\E,\s)$ be an extriangulated category with enough injective objects and projective morphisms.
If an additive subfunctor $\F\subseteq\E$ has enough injective morphisms, then we have
\begin{enumerate}
\item[(1)] The pair $({^{\perp_\E}(\F\mbox{-}\inj)},{(^{\perp_\E}(\F\mbox{-}\inj))}^{\perp_\E})$ of ideals generated by
$\F\mbox{-}\inj$ is a complete $\E$-cotorsion pair of ideals.
\item[(2)] $\Ph(\F)^{\perp_\E}=\Ph(\F)^\star\mbox{-}\inj$, and $\Ph(\F)^{\perp_\E}$ is the minimal ideal containing
$\F\mbox{-}\inj$ and satisfying the following property (C): Let $\I$ be an ideal and consider a morphism of $\E$-triangles
$$\xymatrix{
A \ar[r]^a\ar[d]^f & B \ar[r]^b\ar[d]^g & E\ar[d]^h\ar@{-->}[r]^{\gamma}& \\
X \ar[r]^x & Y \ar[r]^y & Z \ar@{-->}[r]^{\delta}& .}$$
If $f\in\I$ and $E$ is an injective object, then $g\in\I$.
\item[(3)] The additive subfunctor $\Ph(\F)^\star\subseteq\E$ is the maximal additive subfunctor of $\F$ having enough special injective morphisms.
\end{enumerate}
\end{theorem}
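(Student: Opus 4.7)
For Part~(1), I will chain together earlier results. Since $\F$ has enough injective morphisms, Proposition~3.12 yields $\Ph(\F)={^{\perp_\E}(\F\mbox{-}\inj)}$, so the pair in question is precisely $(\Ph(\F),\Ph(\F)^{\perp_\E})$. With $\C$ having enough projective morphisms, Theorem~4.2 gives that $\Ph(\F)$ is a special precovering ideal, and Theorem~3.8 upgrades this to an $\E$-cotorsion pair. Finally, with enough injective objects, Theorem~4.4 implies $\Ph(\F)^{\perp_\E}$ is a special preenveloping ideal, so the cotorsion pair is complete.

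For Part~(2), the identity $\Ph(\F)^{\perp_\E}=\Ph(\F)^\star\mbox{-}\inj$ is Proposition~3.4(2) applied with $\I=\Ph(\F)$. Next, $\F\mbox{-}\inj\subseteq\Ph(\F)^{\perp_\E}$ follows from Proposition~3.4(1), and $\Ph(\F)^{\perp_\E}$ satisfies property (C) by Lemma~4.3 with $\mathcal{M}=\Ph(\F)$. Minimality is the genuine content: given an ideal $\I\supseteq\F\mbox{-}\inj$ satisfying (C), I want to show $\Ph(\F)^{\perp_\E}\subseteq\I$. For $g\colon A\to X$ in $\Ph(\F)^{\perp_\E}$, my plan is to build a special $\Ph(\F)^{\perp_\E}$-preenvelope $e\colon A\to B$ of $A$ by the construction in Theorem~4.4, factor $g=g'e$, and show $e\in\I$. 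The key point is that in Theorem~4.4's construction $e$ is assembled from a special $\Ph(\F)$-precover, and that special $\Ph(\F)$-precover is itself built in Theorem~4.2 as the pushout of an $\F$-injective $\F$-inflation---an element of $\F\mbox{-}\inj\subseteq\I$. The main obstacle will be to run through the three-dimensional commutative diagram in the proof of Theorem~4.4, replacing each invocation of Lemma~4.3 by property (C) applied to $\I$, so as to transport the $\F\mbox{-}\inj$ ingredient up to $e\in\I$; then $g=g'e\in\I$ because $\I$ is an ideal.

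For Part~(3), $\Ph(\F)^\star\subseteq\F$ is immediate from the defining property of an $\F$-phantom morphism. That $\Ph(\F)^\star$ has enough special injective morphisms follows by applying Theorem~4.5(III) to the complete $\E$-cotorsion pair produced in Part~(1). For maximality, let $\F'\subseteq\F$ be any additive subfunctor having enough special injective morphisms. By the bijective correspondence $(\bigstar)$, $\F'=\Ph(\F')^\star$. Since $\F'\subseteq\F$, every $\F'$-phantom morphism is also an $\F$-phantom morphism, so $\Ph(\F')\subseteq\Ph(\F)$; applying $(-)^\star$, which is clearly monotone on ideals, then gives $\F'=\Ph(\F')^\star\subseteq\Ph(\F)^\star$, completing the argument.
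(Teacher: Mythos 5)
Your Parts (1) and (2) follow the paper's own route: Part (1) is the same chain (Proposition 3.12 to identify ${^{\perp_\E}(\F\mbox{-}\inj)}$ with $\Ph(\F)$, Theorem 4.2 for the special precovering property, Theorems 3.8 and 4.4 for completeness), and your plan for the minimality in Part (2) --- run the construction of Theorem 4.4 using the special $\Ph(\F)$-precover of Theorem 4.2 built from an $\F$-injective $\F$-inflation, replace the invocation of Lemma 4.3 by property (C) for $\I$, and then factor the given morphism through the resulting preenvelope --- is exactly what the paper does, though you leave the three-dimensional diagram chase as a plan rather than carrying it out.

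Part (3) contains a genuine gap: the maximality argument is circular. You invoke ``$\F'=\Ph(\F')^\star$ by the bijective correspondence $(\bigstar)$'', but the only composite identity that Theorem 4.5 actually establishes is the other one, namely $\I=\Ph(\I^\star)$ for a special precovering ideal $\I$ (step (III)); the cycle of implications never shows that $(-)^\star\circ\Ph(-)$ is the identity on additive subfunctors with enough special injective morphisms, only that $\Ph(-)\circ(-)^\star$ is the identity on special precovering ideals. In the paper, the identity $\F'=\Ph(\F')^\star$ is obtained only in Corollary 4.7, \emph{as a consequence of} Theorem 4.6(3): since $\F'$ is a subfunctor of itself with enough special injective morphisms, maximality gives $\F'\subseteq\Ph(\F')^\star$, the reverse inclusion being trivial. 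So the identity you are using is essentially equivalent to the statement you are proving. The paper closes the gap by a direct construction: given an $\F'$-triangle $\delta$ realized by $A\to B\to C$, take a special $\F'$-injective $\F'$-inflation $e:A\to X$; the $\F'$-injectivity of $e$ forces $e_\star\delta=0$, which yields a factorization $e=ba$ and hence, via (ET3), a morphism of triangles exhibiting $\delta=c^\star\gamma$ where $\gamma$ realizes the triangle starting with $e$; the ``special'' datum then supplies $j\in\Ph(\F')\subseteq\Ph(\F)$ with $\delta=(jc)^\star\gamma'$, so $\delta\in\Ph(\F)^\star$. Your monotonicity observation $\Ph(\F')\subseteq\Ph(\F)$ is correct and is indeed the key input of that chase, but it has to be fed into an explicit realization of $\delta$ as a pullback along a $\Ph(\F)$-morphism, not into the unproved identity $\F'=\Ph(\F')^\star$.
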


\begin{proof}
(1) By Proposition \ref{3.12}, we have ${^{\perp_\E}(\F\mbox{-}\inj)}=\Ph(\F)$. By Theorem \ref{4.2}, $\Ph(\F)$ is a special precovering ideal.
Moreover, by Theorem \ref{4.4}, $\Ph(\F)^{\perp_\E}={(^{\perp_\E}(\F\mbox{-}\inj))}^{\perp_\E}$ is a special preenveloping ideal.
Thus $({^{\perp_\E}(\F\mbox{-}\inj)},{(^{\perp_\E}(\F\mbox{-}\inj))}^{\perp_\E})$  is a complete $\E$-cotorsion pair of ideals.

(2) By Proposition \ref{3.4}, $\F\mbox{-}\inj\subseteq\Ph(\F)^{\perp_\E}=\Ph(\F)^\star\mbox{-}\inj$. By Lemma \ref{4.3},
$\Ph(\F)^{\perp_\E}$ satisfies the property (C). Now let $\mathcal{J}$ be an ideal of $\C$ containing $\F\mbox{-}\inj$ and
satisfying the  property (C). We will show that $\Ph(\F)^{\perp_\E}\subseteq \mathcal{J}$. To do it, let $j\in\Ph(\F)^{\perp_\E}$ with $j:A\ra J$.
Consider the same commutative diagram as in the proof of Theorem \ref{4.4}. Since $\F\subseteq\E$ has enough injective morphisms,
we can adjust the morphism $g:Y\ra W$ to be in $\F\mbox{-}\inj$. By the property (C), we have that $k\in\mathcal{J}$ and $a=ke'\in\mathcal{J}$.
Moreover, by Theorem \ref{4.4}, the morphism $a:A\ra B$ is a $\Ph(\F)^{\perp_\E}$-preenvelope of $A$, it factors through $j$, that is,
there exists $b:B\ra J$ such that $j=ba$, and thus $j\in\mathcal{J}$, as desired.

(3) Clearly, $\Ph(\F)^\star\subseteq\F$. Now since $\Ph(\F)$ is a special precovering ideal by Theorem \ref{4.2}, $\Ph(\F)^\star$ is an
additive subfunctor having enough special injective morphisms by the correspondence $(\bigstar)$. Suppose that $\F'\subseteq\F$ is an
additive subfunctor having enough special injective morphisms. To show $\F'\subseteq \Ph(\F)^\star$, it suffices to show that every $\F'$-triangle
$\xymatrix@C=0.5cm{A\ar[r] & B \ar[r] & C\ar@{-->}[r]^{\delta}&}$ is a $\Ph(\F)^\star$-triangle.

Let $e:A\ra X$ be a special $\F'$-injective $\F'$-inflation. Then we have a morphism of $\F'$-triangles
$$\xymatrix{
A \ar[r]^a\ar[d]^e & B \ar[r]\ar[d] & C\ar@{=}[d]\ar@{-->}[r]^{\delta}& \\
X \ar[r] &Y \ar[r] & C \ar@{-->}[r]^{e_\star\delta}& .}$$
Since $e$ is $\F'$-injective, we have that $e_\star\delta=0$ and there exists $b:B\ra X$ such that $e=ba$. This also induces
the following commutative diagram
$$\xymatrix{
A \ar[r]^a\ar@{=}[d] & B \ar[r]\ar[d]^b & C\ar@{-->}[r]^{\delta}& \\
A \ar[r]^e &X \ar[r] & Z \ar@{-->}[r]^{\gamma}& .}$$
By (ET3), we get a morphism of $\F'$-triangles
$$\xymatrix{
A \ar[r]^a\ar@{=}[d] & B \ar[r]\ar[d]^b & C\ar[d]^c\ar@{-->}[r]^{\delta}& \\
A \ar[r]^e &X \ar[r] & Z \ar@{-->}[r]^{\gamma}& .}$$
On the other hand, since $e:A\ra X$ is a special $\F'$-injective $\F'$-inflation, by definition there exists a morphism of $\E$-triangles
$$\xymatrix{
A \ar[r]^e\ar@{=}[d] & X \ar[r]\ar[d] & Z\ar[d]^j\ar@{-->}[r]^{\gamma}& \\
A \ar[r] &X' \ar[r] & Z' \ar@{-->}[r]^{\gamma'}& }$$ with $j\in\Ph(\F')\subseteq\Ph(\F)$.
Thus we get a morphism of $\E$-triangles
$$\xymatrix{
A \ar[r]^a\ar@{=}[d] & B \ar[r]\ar[d] & C\ar[d]^{jc}\ar@{-->}[r]^{\delta}& \\
A \ar[r] &X' \ar[r] & Z' \ar@{-->}[r]^{\gamma'}& }$$ with $jc\in\Ph(\F)$.
This shows that $\delta=(jc)^\star\gamma'\in \Ph(\F)^\star$, as desired.
\end{proof}

\begin{corollary}\label{4.7}
Let $(\C,\E,\s)$ be an extriangulated category with enough injective objects and projective morphisms.
If $\F\subseteq\E$ is an additive subfunctor having enough injective morphisms, then the following statement are equivalent.
\begin{enumerate}
\item[(1)] The subfunctor $\F$ has enough special injective morphisms.
\item[(2)] $\F=\Ph(\F)^\star$.
\item[(3)] $\Ph(\F)^{\perp_\E}=\F\mbox{-}\inj$.
\end{enumerate}
\end{corollary}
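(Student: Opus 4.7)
The plan is to route everything through condition~(2). For (1)$\Leftrightarrow$(2), I would invoke Theorem~\ref{4.6}(3), which says that $\Ph(\F)^\star$ is itself an additive subfunctor of $\F$ having enough special injective morphisms and is maximal with this property. Hence if~(1) holds, then $\F$ is a candidate for that maximum, forcing $\F\subseteq\Ph(\F)^\star$; combined with the automatic inclusion $\Ph(\F)^\star\subseteq\F$ (each $i\in\Ph(\F)$ sends every $\E$-extension into $\F$), this gives~(2). Conversely, (2) lets $\F$ inherit the ``enough special injective morphisms'' property of $\Ph(\F)^\star$.

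For (2)$\Rightarrow$(3), Proposition~\ref{3.4}(2) gives $\Ph(\F)^{\perp_\E}=\Ph(\F)^\star\mbox{-}\inj$, and substituting $\F=\Ph(\F)^\star$ yields~(3). The remaining direction (3)$\Rightarrow$(2) is the main step. I would fix $\delta\in\F(C,A)$ realized by an $\E$-triangle $\xymatrix@C=0.5cm{A\ar[r]^x & B\ar[r]^y & C\ar@{-->}[r]^\delta &}$; since $\Ph(\F)^\star\subseteq\F$ is automatic, only $\delta\in\Ph(\F)^\star$ requires proof. By Theorem~\ref{4.6}(3), $\Ph(\F)^\star$ has enough (special) injective morphisms, so I choose a $\Ph(\F)^\star$-triangle $\xymatrix@C=0.5cm{A\ar[r]^e & X\ar[r] & Z\ar@{-->}[r]^\rho &}$ with $e\in\Ph(\F)^\star\mbox{-}\inj$. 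Proposition~\ref{3.4}(2) together with~(3) identifies this last class with $\F\mbox{-}\inj$, so $e_\star\delta=0$. Pushing $\delta$ along $e$ produces a morphism of $\E$-triangles whose bottom row splits; a retraction $r$ of the bottom inflation $x':X\to Y$ composed with the induced middle map $g:B\to Y$ satisfies $(rg)x=e$. Hence the $\Ph(\F)^\star$-inflation $e$ factors through the $\E$-inflation $x$, and Lemma~\ref{3.11} (applied with $\Ph(\F)^\star$ in the role of~$\F$) upgrades $x$ to a $\Ph(\F)^\star$-inflation, yielding $\delta\in\Ph(\F)^\star$.

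The main obstacle is exactly this passage (3)$\Rightarrow$(2): the hypothesis is information purely about morphism classes ($\F$-injective morphisms), whereas the conclusion concerns an equality of extension subfunctors. The key device is exploiting the vanishing $e_\star\delta=0$ (forced by~(3)) to reverse the apparent direction of factorization so that $e$ factors through $x$ rather than the reverse; only after this reversal can Lemma~\ref{3.11} transport the $\Ph(\F)^\star$-inflation structure from $e$ up to $x$, converting the morphism-level hypothesis into extension-level information.
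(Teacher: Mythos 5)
Your proof is correct, and two of its three legs coincide with the paper's: (1)$\Rightarrow$(2) via the maximality statement of Theorem~\ref{4.6}(3) together with the automatic inclusion $\Ph(\F)^\star\subseteq\F$, and (2)$\Rightarrow$(3) via Proposition~\ref{3.4}(2). Where you genuinely diverge is in how the cycle is closed. The paper proves (3)$\Rightarrow$(1): it invokes Theorem~\ref{4.2} to see that $\Ph(\F)$ is a special precovering ideal, then Theorem~\ref{4.5}(II) to conclude that $(\Ph(\F),\Ph(\F)^{\perp_\E})=(\Ph(\F),\F\mbox{-}\inj)$ is a complete $\E$-cotorsion pair, so $\F\mbox{-}\inj$ is a special preenveloping ideal; unwinding the definition of a special preenvelope (the right-hand vertical morphism lies in ${^{\perp_\E}(\F\mbox{-}\inj)}=\Ph(\F)$, so the top row is an $\F$-triangle) yields exactly the data required for $\F$ to have enough special injective morphisms. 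You instead prove (3)$\Rightarrow$(2) directly at the level of extensions: the identification $\Ph(\F)^\star\mbox{-}\inj=\Ph(\F)^{\perp_\E}=\F\mbox{-}\inj$ forces $e_\star\delta=0$ for any $\F$-extension $\delta$, the resulting splitting reverses the factorization so that the $\Ph(\F)^\star$-inflation $e$ factors through $x$, and Lemma~\ref{3.11} (whose proof exhibits $\delta$ as a pullback of $\rho\in\Ph(\F)^\star$) gives $\delta\in\Ph(\F)^\star$. This is sound --- it is essentially the same splitting-plus-Lemma~\ref{3.11} mechanism the paper deploys inside the proof of Theorem~\ref{4.6}(3), with the ``special'' hypothesis used there replaced by your hypothesis (3). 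The paper's route is shorter given the cotorsion-pair machinery already in place; yours is more self-contained at the extension level and makes visible exactly where (3) enters, at the cost of partially replaying the argument of Theorem~\ref{4.6}(3).
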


\begin{proof}
(1) $\Rightarrow$ (2) It follows from Theorem \ref{4.6}(3).

(2) $\Rightarrow$ (3) By Proposition \ref{3.4}(2).

(3) $\Rightarrow$ (1) By Theorem \ref{4.2}, $\Ph(\F)$ is a special precovering ideal. By Theorem \ref{4.5}(II),
$(\Ph(\F),\Ph(\F)^{\perp_\E})=(\Ph(\F),\F\mbox{-}\inj)$ is a complete $\E$-cotorsion pair, and hence $\F\mbox{-}\inj$ is a special
preenveloping ideal, that is, for any $A\in\C$, there exists a morphism of $\E$-triangles
$$\xymatrix{
A \ar[r]^e\ar@{=}[d] & B \ar[r]\ar[d] & C\ar[d]^{j}\ar@{-->}[r]^{\delta}& \\
A \ar[r] &Y \ar[r] & Z \ar@{-->}[r]^{\gamma}& }$$
with $e\in\F\mbox{-}\inj$ and $j\in {^{\perp_\E}(\F\mbox{-}\inj)}$. Moreover, since $(\Ph(\F),\F\mbox{-}\inj)$ is an $\E$-cotorsion pair,
we have that ${^{\perp_\E}(\F\mbox{-}\inj)}=\Ph(\F)$ and $j\in \Ph(\F)$. Thus $\F$ has enough special injective morphisms, as desired.
\end{proof}

\section{The correspondences for object ideals}

Let $\I$ be a class of morphisms in $\C$. We write $\Ob(\I):=\{A\in\C\mid \id_A\in\I\}$, and denote by $<\I>$ the smallest ideal of $\C$
containing $\I$. If $\I=<\Ob(\I)>$, then we call $\I$ an \emph{object ideal}, that is, it is generated by itself objects.
An object $A\in\C$ is called {\it $\F$-injective} if $\id_A\in\F\mbox{-}\inj$. It is easy to check that an object  $A\in\C$ is
$\F$-injective if and only if it is injective with respect to all $\F$-triangles.

Let $(\C,\E,\s)$ be an extriangulated category with enough projective morphisms and $\F\subseteq\E$ an additive subfunctor having
enough injective objects. Then for any $C\in\C$, there is an $\E$-triangle $\xymatrix@C=0.5cm{
K \ar[r] & P \ar[r]^p & C\ar@{-->}[r]^{\gamma}&}$ with $p$ an $\E$-projective morphism. For the object $K$, by assumption there exists
an $\F$-inflation $e:K\ra E$ with $E$ an $\F$-injective object. Then we get a morphism of $\E$-triangles
$$\xymatrix{
K \ar[r]\ar[d]^e & P \ar[r]^p\ar[d] & C\ar@{=}[d]\ar@{-->}[r]^{\gamma}& \\
E \ar[r] & Y \ar[r]^\varphi & C \ar@{-->}[r]^{e_\star\gamma}& .}$$
Since $E$ is an $\F$-injective object, that is, $\id_E\in\F\mbox{-}\inj$, we have $e=\id_E e\in\F\mbox{-}\inj$.
Thus as in the proof of Theorem \ref{4.2}, the morphism $\varphi$ is an $\F$-phantom morphism.
Therefore, for any $C\in\C$, there always exists an $\E$-triangle
$\xymatrix@C=0.5cm{E \ar[r] & Y \ar[r]^\varphi & C \ar@{-->}[r]^{\delta}& }$ with $\varphi$ an $\F$-phantom morphism and $E$
an $\F$-injective object. Moreover, since $\F\mbox{-}\inj\subseteq\Ph(\F)^{\perp_\E}$, the object $E$ is also in $\Ph(\F)^{\perp_\E}$.
This allows us to give the following definition.

\begin{definition}
{\rm Let $\I$ be an ideal of $\C$. We call a morphism $i:X\ra C$ in $\I$ an \emph{object-special $\I$-precover} of $C$ if there exists an
$\E$-triangle $\xymatrix@C=0.5cm{A \ar[r] & X \ar[r]^i & C \ar@{-->}[r]^{\delta}& }$ with $A\in\I^{\perp_\E}$.}
\end{definition}

By a trivial morphism of $\E$-triangle
$$\xymatrix{
A \ar[r]\ar@{=}[d] & X \ar[r]\ar@{=}[d] & C\ar@{=}[d]\ar@{-->}[r]^{\delta}& \\
A \ar[r] & X \ar[r]^i & C \ar@{-->}[r]^{\delta}& ,}$$
we have that any object-special $\I$-precover is a special $\I$-precover. In the following, we give a sufficient condition such that
a special precovering ideal $\I$ is an object-special precovering ideal, that is, any object in $\C$ admits an object-special $\I$-precover.

\begin{proposition}\label{5.2}
Let $\I$ be a special precovering ideal. If $\I^{\perp_\E}$ is an object ideal, then $\I$ is an object-special precovering ideal.
\end{proposition}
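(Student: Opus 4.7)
The plan is to start from any $C\in\C$ and upgrade a given special $\I$-precover of $C$ into an object-special one by replacing the left-hand term of its defining $\E$-triangle by an object whose identity already lies in $\I^{\perp_\E}$. Since $\I$ is a special precovering ideal, one has $i:X\to C$ in $\I$ fitting into a morphism of $\E$-triangles whose left column is some $j:A\to A'$ with $j\in\I^{\perp_\E}$, and whose lower row has the form $\xymatrix@C=0.5cm{A'\ar[r]&X\ar[r]^i&C\ar@{-->}[r]^{j_\star\delta}&}$.

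My first step is to exploit the assumption $\I^{\perp_\E}=\langle\Ob(\I^{\perp_\E})\rangle$: this says $j$ is a finite sum of morphisms of the form $A\to E_k\to A'$ with each $E_k\in\Ob(\I^{\perp_\E})$. I would first observe that $\Ob(\I^{\perp_\E})$ is closed under finite direct sums, because $\id_{E_1\oplus E_2}=\iota_1\pi_1+\iota_2\pi_2$ lies in any ideal containing $\id_{E_1}$ and $\id_{E_2}$. Assembling the summand-factorizations through $\bigoplus_k E_k$ yields a single factorization $j=\alpha\beta$ with $\beta:A\to E$, $\alpha:E\to A'$, and $E\in\Ob(\I^{\perp_\E})$.

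Next I would push out $\delta$ along $\beta$ using the basic pushout construction for extriangulated categories, producing an $\E$-triangle $\xymatrix@C=0.5cm{E\ar[r]&B'\ar[r]^{i'}&C\ar@{-->}[r]^{\beta_\star\delta}&}$ whose leftmost term is in $\Ob(\I^{\perp_\E})$ by design. Pushing out once more along $\alpha$ produces a second realization $\xymatrix@C=0.5cm{A'\ar[r]&X'\ar[r]^{i''}&C\ar@{-->}[r]^{(\alpha\beta)_\star\delta}&}$ of the same extension $j_\star\delta$. By uniqueness of realization up to equivalence, there is an isomorphism $b:X\to X'$ with $i''b=i$, so $i''=ib^{-1}\in\I$; and the morphism of $\E$-triangles from the second pushout provides $\psi:B'\to X'$ with $i''\psi=i'$, forcing $i'\in\I$ because $\I$ is an ideal. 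The $\E$-triangle $\xymatrix@C=0.5cm{E\ar[r]&B'\ar[r]^{i'}&C\ar@{-->}[r]^{\beta_\star\delta}&}$ then exhibits $i'$ as an object-special $\I$-precover of $C$, as required.

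The only step with real content is reducing the condition $j\in\langle\Ob(\I^{\perp_\E})\rangle$ to a single factorization through one object of $\Ob(\I^{\perp_\E})$; everything else is a routine double pushout combined with the uniqueness of realization. The main book-keeping hazard I foresee is orienting the equivalence between the two realizations of $j_\star\delta$ in the correct direction, so that $i''\in\I$ genuinely follows from $i\in\I$ and then carries down to $i'$ via $\psi$.
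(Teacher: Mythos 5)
Your proof is correct and follows essentially the same route as the paper's: factor $j\in\I^{\perp_\E}$ through a single object $E\in\Ob(\I^{\perp_\E})$ and take the realization of the intermediate pushout $\beta_\star\delta$ as the new object-special precovering triangle. The only differences are cosmetic --- you justify the single-object factorization via closure of $\Ob(\I^{\perp_\E})$ under finite direct sums (a point the paper takes for granted), and you identify the new deflation as lying in $\I$ via uniqueness of realizations rather than by directly composing the two morphisms of $\E$-triangles as the paper does.
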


\begin{proof}
Let $C\in\C$, and take a special $\I$-precover $i':X'\ra C$ which comes from a morphism of $\E$-triangles
$$\xymatrix{
A \ar[r]\ar[d]^j & B \ar[r]\ar[d] & C\ar@{=}[d]\ar@{-->}[r]^{\delta}& \\
A' \ar[r] & X' \ar[r]^{i'} & C \ar@{-->}[r]^{j_\star\delta}& }$$ with $j\in\I^{\perp_\E}$.
Since $\I^{\perp_\E}$ is an object ideal by assumption, there exist $Y\in\I^{\perp_\E}$ and morphisms $j_1:A\ra Y$, $j_2:Y\ra A'$
such that $j=j_2 j_1$. Then by the equality $j_\star \delta={j_2}_\star{j_1}_\star\delta$, we can decompose the above morphism of $\E$-triangles
to the following morphisms of $\E$-triangles
$$\xymatrix{
A \ar[r]\ar[d]^{j_1} & B \ar[r]\ar[d] & C\ar@{=}[d]\ar@{-->}[r]^{\delta}& \\
Y \ar[r]\ar[d]^{j_2} & X \ar[r]^i\ar[d]^k & C\ar@{=}[d]\ar@{-->}[r]^{{j_1}_\star\delta}& \\
A' \ar[r] & X' \ar[r]^{i'} & C \ar@{-->}[r]^{{j}_\star\delta}&}$$
with $i=i'k\in\I$. Thus $i$ is an object-special $\I$-precover of $C$.
\end{proof}

In view of Proposition \ref{5.2}, it is natural to ask when the right perpendicularity of
a special precovering ideal is an object ideal. To study it, we consider the following

\begin{enumerate}
\item[(J)] Let $\I$ be an ideal of $\C$. There exists an object ideal $\J\subseteq\I^{\perp_\E}$
such that any $C\in \C$ admits an $\I$-precover $i:X\ra C$ together with an $\E$-triangle
$\xymatrix@C=0.5cm{
A \ar[r] & X \ar[r]^i & C\ar@{-->}[r]^{\delta}& }$, where $A\in\J$.
\end{enumerate}

Let $\mathcal{X}$ and $\mathcal{Y}$ be two classes of objects in $\C$. We write
$$\mathcal{X}\diamond\mathcal{Y}:=\{Z\in\C\mid \mbox{ there exists an $\E$-triangle }
\xymatrix@C=0.5cm{
X \ar[r] & Z \ar[r] & Y\ar@{-->}[r]^{\delta}& }\mbox{ with }X\in\mathcal{X}\mbox{ and } Y\in\mathcal{Y}\}.$$

\begin{proposition}\label{5.3}
Let $(\C,\E,\s)$ be an extriangulated category with enough injective objects and $\I$ be a special precovering ideal of $\C$.
The condition (J) is satisfied if and only if $\I^{\perp_\E}$ is an object ideal; in this case, we have
$$\I^{\perp_\E}=<\Ob(\J)\diamond\Ob(\E\mbox{-}\inj)>.$$
\end{proposition}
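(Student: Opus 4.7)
The plan is to prove the stated equivalence in the two directions, reading off the equality $\I^{\perp_\E}=<\Ob(\J)\diamond\Ob(\E\mbox{-}\inj)>$ from the harder implication. Throughout I will use freely that $\I$ is special precovering and $\C$ has enough injective objects, as assumed.

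For the implication ``$\I^{\perp_\E}$ is an object ideal $\Rightarrow$ (J)'', the witness is simply $\J:=\I^{\perp_\E}$. By Proposition 5.2 the ideal $\I$ is then object-special precovering, so every $C\in\C$ admits an $\I$-precover $i:X\to C$ sitting in an $\E$-triangle $A\to X\to C$ with $A\in\Ob(\I^{\perp_\E})=\Ob(\J)$, which is exactly the content of (J).

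For the converse, assume (J) with a fixed object ideal $\J\subseteq\I^{\perp_\E}$; I will establish both inclusions of $\I^{\perp_\E}=<\Ob(\J)\diamond\Ob(\E\mbox{-}\inj)>$ and thereby also that $\I^{\perp_\E}$ is an object ideal. The inclusion $<\Ob(\J)\diamond\Ob(\E\mbox{-}\inj)>\subseteq\I^{\perp_\E}$ is quick: given any object $B$ with an $\E$-triangle $K\to B\to D$ where $K\in\Ob(\J)\subseteq\Ob(\I^{\perp_\E})$ and $D$ is injective, Lemma 4.3 with $\mathcal{M}=\I$ applied to the identity morphism of this $\E$-triangle (taking $f=\id_K$) yields $\id_B\in\I^{\perp_\E}$, so $B\in\Ob(\I^{\perp_\E})$. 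For the reverse inclusion, fix $j:A\to J$ in $\I^{\perp_\E}$. Pick an $\E$-triangle $A\to D\to C$ with $D$ injective and extension $\delta$, and by (J) pick an $\I$-precover $i:X\to C$ fitting into an $\E$-triangle $K\to X\to C$ with $K\in\J$. These two $\E$-triangles share the rightmost term $C$, so the $3\times 3$-type construction of \cite[Proposition 3.15]{NP} (already invoked in the proof of Lemma 4.1) applied to them produces a commutative diagram whose new middle row is an $\E$-triangle $A\to B\to X$ realizing $i^\star\delta$ and whose new middle column is an $\E$-triangle $K\to B\to D$. In particular $B\in\Ob(\J)\diamond\Ob(\E\mbox{-}\inj)$, and the first morphism of the middle row supplies $a:A\to B$. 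The long exact sequence of \cite[Corollary 3.12]{NP} applied to $A\to B\to X$ gives exactness of $\C(B,J)\to\C(A,J)\to\E(X,J)$, where the connecting map sends $j$ to $j_\star(i^\star\delta)=i^\star(j_\star\delta)$; this vanishes because $i\in\I$ and $j\in\I^{\perp_\E}$, so $j$ lifts to some $g:B\to J$ with $j=ga$, placing $j$ in $<\Ob(\J)\diamond\Ob(\E\mbox{-}\inj)>$.

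The main obstacle I foresee is deploying the $3\times 3$ construction correctly in the extriangulated setting: identifying \cite[Proposition 3.15]{NP} as the appropriate tool, orienting the two input $\E$-triangles (both ending in $C$) so that the resulting middle column is genuinely an $\E$-triangle of the form $K\to B\to D$ rather than some permutation, and reading off the relevant compatibilities for the extensions. Once that diagram is in place, the factorization of $j$ through $a$ is a direct consequence of the long exact sequence together with the very definition of $\I^{\perp_\E}$, and Lemma 4.3 takes care of the easy inclusion essentially for free.
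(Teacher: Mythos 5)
Your proof is correct and follows essentially the same route as the paper's: the easy direction via Proposition \ref{5.2}, the inclusion $<\Ob(\J)\diamond\Ob(\E\mbox{-}\inj)>\subseteq\I^{\perp_\E}$ via Lemma \ref{4.3} applied to the identity morphism of triangles, and the reverse inclusion via the diagram of \cite[Proposition 3.15]{NP} built from an injective $\E$-triangle for $A$ and the (J)-triangle for $C$. The only (harmless) deviation is at the end, where the paper invokes the proof of Theorem \ref{4.4} to see that $a:A\ra Z$ is an $\I^{\perp_\E}$-preenvelope, whereas you extract the factorization of $j$ through $a$ directly from the long exact sequence and the vanishing $i^\star j_\star\delta=0$ --- a slightly more self-contained version of the same computation.
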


\begin{proof}
The sufficiency is trivial. In the following, we prove the necessity.

Let $Z\in\Ob(\J)\diamond\Ob(\E\mbox{-}\inj)$, that is, there exists an $\E$-triangle
$\xymatrix@C=0.5cm{
X \ar[r] & Z \ar[r] & Y\ar@{-->}[r]^{\delta}& }$ with $X\in\J$ and $Y$ an $\E$-injective object. By assumption,
we have $X\in\I^{\perp_\E}$, and hence $Z\in\I^{\perp_\E}$ by Lemma \ref{4.3}. This shows that
$<\Ob(\J)\diamond\Ob(\E\mbox{-}\inj)>\subseteq\I^{\perp_\E}$.

Conversely, let $A\in\C$. Then there exists an $\E$-triangle $\xymatrix@C=0.5cm{
A \ar[r] & E \ar[r]^e & C\ar@{-->}[r]^{\delta}& }$ with $E$ an injective object by assumption.
For the object $C$, by (J) there exists an $\E$-triangle $\xymatrix@C=0.5cm{
K \ar[r] & X \ar[r]^i & C\ar@{-->}[r]^{\gamma}& }$ with $K\in\J$ and $i:X\ra C$ an $\I$-precover of $C$. By (ET4),
we get the following commutative diagram
$$\xymatrix{&K\ar@{=}[r]\ar[d]&K\ar[d]&\\
A\ar[r]^a\ar@{=}[d]&Z\ar[r]\ar[d]&X\ar[d]^i\ar@{-->}[r]^{i^\star\delta}&\\
A \ar[r] & E \ar[r]^e \ar@{-->}[d]^{e^\star\gamma}& C\ar@{-->}[r]^{\delta}\ar@{-->}[d]^{\gamma}&\\
&&.&}$$
By the middle column in the above diagram, we have that $Z\in\Ob(\J)\diamond\Ob(\E\mbox{-}\inj)$. On the other hand,
as in the proof of Theorem \ref{4.4}, the morphism $a:A\ra Z$ is a special $\I^{\perp_\E}$-preenvelope of $A$;
in particular, it is an $\I^{\perp_\E}$-preenvelope of $A$. Thus any $f\in\I^{\perp_\E}(A,B)$ factors through $a$,
that is, there exists $b:Z\ra B$ such that $f=ba$, which shows that $f\in<\Ob(\J)\diamond\Ob(\E\mbox{-}\inj)>$.
Thus $\I^{\perp_\E}\subseteq<\Ob(\J)\diamond\Ob(\E\mbox{-}\inj)>$, and therefore
$\I^{\perp_\E}=<\Ob(\J)\diamond\Ob(\E\mbox{-}\inj)>$; in particular, $\I^{\perp_\E}$ is an object ideal.
\end{proof}

By Propositions \ref{5.2} and \ref{5.3}, we immediately have the following

\begin{corollary}
Let $(\C,\E,\s)$ be an extriangulated category with enough injective objects. If the property (J) is satisfied,
then any special precovering ideal of $\C$ is an object-special precovering ideal.
\end{corollary}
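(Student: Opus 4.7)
The plan is to combine the two preceding propositions directly; no new machinery should be needed. Suppose $(\C,\E,\s)$ has enough injective objects, property (J) holds, and $\I$ is a special precovering ideal of $\C$. The first step is to apply Proposition \ref{5.3} to the ideal $\I$: since $\I$ is special precovering and property (J) is in force, Proposition \ref{5.3} yields that $\I^{\perp_\E}$ is an object ideal (and in fact gives the explicit description $\I^{\perp_\E}=\langle\Ob(\J)\diamond\Ob(\E\mbox{-}\inj)\rangle$, though we only need the qualitative statement).

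The second step is to feed this into Proposition \ref{5.2}. Its hypotheses require exactly that $\I$ be a special precovering ideal and that $\I^{\perp_\E}$ be an object ideal, both of which are now available. Its conclusion is that $\I$ is an object-special precovering ideal, which is exactly what we want.

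Thus the proof is a two-line chain: property (J) plus enough injective objects forces $\I^{\perp_\E}$ to be an object ideal by Proposition \ref{5.3}, and then being an object ideal on the right-perpendicular side upgrades ``special precovering'' to ``object-special precovering'' by Proposition \ref{5.2}. Since both inputs are already established in the section, I expect no genuine obstacle; the only thing to be careful about is to verify that the hypotheses of each proposition are literally satisfied (namely that property (J) is applied to our $\I$ and not some auxiliary ideal, and that the object ideal $\J$ appearing in (J) is a \emph{subideal} of $\I^{\perp_\E}$, which is exactly what (J) asserts). Once that bookkeeping is done, the corollary is immediate.
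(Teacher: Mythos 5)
Your argument is exactly the paper's: the corollary is stated there as an immediate consequence of Propositions \ref{5.2} and \ref{5.3}, chained in precisely the order you describe. The proposal is correct and matches the intended proof.
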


The additive subfunctor $\F$ of $\E$ is said to \emph{have enough special injective objects} if for any $A\in\C$,
there exists an $\F$-triangle $\xymatrix@C=0.5cm{
A \ar[r]^e & B \ar[r] & C\ar@{-->}[r]^{\delta}&}$ with $B\in\F\mbox{-}\inj$, together with a morphism of $\E$-triangles
$$\xymatrix{
A \ar[r]^e\ar@{=}[d] & B \ar[r]\ar[d] & C\ar[d]^{\varphi}\ar@{-->}[r]^{\delta}& \\
A \ar[r] & B' \ar[r]^i & C' \ar@{-->}[r]^{\delta'}& ,}$$
where $\varphi$ is an $\F$-phantom morphism.

\begin{theorem}\label{5.5}
Let  $(\C,\E,\s)$ be an extriangulated category. Then we have the following implications.
$$\xymatrix{
{\begin{tabular}{|c|}
  \hline
      \mbox{There exists an additive subfunctor} \\ \mbox{$\F\subseteq\E$ having enough (special)}\\\mbox{injective objects and $\I=\Ph(\F)$} \\
  \hline
\end{tabular}}\ar@{=>}[rrrr]^{\rm(I)}_{\tiny ~~~~~ \C\mbox{ has enough projective morphisms}}&&&&{\begin{tabular}{|c|}
  \hline
        \mbox{$\I$ is an object-special}\\\mbox{precovering ideal} \\
  \hline
\end{tabular}}\ar@{=>}[lllld]_{\rm(II)}^{~~~~~~~~~\tiny\C \mbox{ has enough injective objects}}\\
{\begin{tabular}{|c|}
  \hline
      \mbox{The additive subfunctor $\I^\star\subseteq\E$} \\ \mbox{having enough special injective}\\\mbox{objects and $\I=\Ph(\I^\star)$} \\
  \hline
\end{tabular}}\ar@{=>}[u]^{\rm(V)}\ar@{=>}[rrrr]^{\rm(III)}_{\tiny ~~~~~ \C\mbox{ has enough projective morphisms}}&&&&{\begin{tabular}{|c|}
  \hline
      \mbox{$\I$ is a special precovering ideal} \\ \mbox{and $\I^{\perp_\E}$ is an object ideal}\\
  \hline
\end{tabular}}\ar@{=>}[u]_{\rm(IV)}}$$
\end{theorem}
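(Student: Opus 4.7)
The plan is to verify each of the five arrows separately. Arrow (V) is trivial: take $\F:=\I^\star$, so the weaker ``(special) injective objects'' hypothesis of the upper-left box is immediately satisfied. Arrow (IV) is precisely the content of Proposition \ref{5.2}, so no further work is required there.

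For arrow (I), I will imitate the construction from the paragraph preceding Definition 5.1 (also used in the proof of Theorem \ref{4.2}). Given $C\in\C$, use enough projective morphisms to produce an $\E$-triangle $\xymatrix@C=0.5cm{K\ar[r]&P\ar[r]^p&C\ar@{-->}[r]^{\gamma}&}$ with $p$ an $\E$-projective morphism; then use the hypothesis on $\F$ to embed $K$ via an $\F$-inflation $e:K\to E$ with $E$ an $\F$-injective object. Pushing out yields a commutative diagram
$$\xymatrix{K\ar[r]\ar[d]^e & P\ar[r]^p\ar[d] & C\ar@{=}[d]\ar@{-->}[r]^{\gamma}&\\ E\ar[r] & Y\ar[r]^{\varphi} & C\ar@{-->}[r]^{e_\star\gamma}&.}$$
The diagram chase of Theorem \ref{4.2}, combined with Lemmas \ref{3.11} and \ref{4.1}, then shows $\varphi\in\Ph(\F)=\I$. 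Moreover, since $E$ is $\F$-injective, $\id_E\in\F\text{-}\inj\subseteq\Ph(\F)^{\perp_\E}=\I^{\perp_\E}$ by Proposition \ref{3.4}(1), so $E\in\Ob(\I^{\perp_\E})$. The bottom row is therefore the desired object-special $\I$-precover.

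For arrow (II), the trivial morphism of $\E$-triangles observed after Definition 5.1 immediately shows that object-special precovering implies special precovering. For the object-ideal assertion on $\I^{\perp_\E}$, I will apply Proposition \ref{5.3} with $\J:=\langle\Ob(\I^{\perp_\E})\rangle$; the hypothesis (J) is verified directly from the object-special precovering condition, and the conclusion $\I^{\perp_\E}=\langle\Ob(\J)\diamond\Ob(\E\text{-}\inj)\rangle$ supplied by Proposition \ref{5.3} (which is exactly where the enough-injective-objects assumption is spent) is patently an object ideal.

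For arrow (III), rather than passing through (I)$+$(II) (which would require enough injective objects, not assumed here), I argue directly. Given $A\in\C$, the hypothesis on $\I^\star$ supplies an $\I^\star$-triangle $\xymatrix@C=0.5cm{A\ar[r]^e&B\ar[r]&D\ar@{-->}[r]^{\delta}&}$ with $B$ an $\I^\star$-injective object, fitting into a morphism of $\E$-triangles whose rightmost column is an $\I^\star$-phantom morphism $\varphi$. By Proposition \ref{3.4}(2), $\I^{\perp_\E}=\I^\star\text{-}\inj$, so both $e$ and $\id_B$ lie in $\I^{\perp_\E}$; by Proposition \ref{3.4}(1), $\varphi\in\Ph(\I^\star)=\I\subseteq{^{\perp_\E}(\I^{\perp_\E})}$. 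Hence $e$ is a special $\I^{\perp_\E}$-preenvelope of $A$, so every $j\in\I^{\perp_\E}(A,J)$ factors as $j=b\circ\id_B\circ e$ through $B\in\Ob(\I^{\perp_\E})$, proving $\I^{\perp_\E}=\langle\Ob(\I^{\perp_\E})\rangle$. The special precovering property of $\I$ comes from arrow (I). The main obstacle in the whole scheme is the diagram chase for arrow (I); once that is in place, the other implications reduce to careful bookkeeping with Propositions \ref{3.4} and \ref{5.3}.
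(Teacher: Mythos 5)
Your arrows (I), (III), (IV) and (V) are handled correctly and essentially as in the paper: (I) is the pushout-along-an-$\F$-injective-inflation construction combined with $\F\mbox{-}\inj\subseteq\Ph(\F)^{\perp_\E}$; (III) shows that every morphism in $\I^{\perp_\E}$ factors through an $\I^\star$-injective object via the special preenvelope supplied by the enough-special-injective-objects hypothesis; (IV) is exactly Proposition \ref{5.2}; and (V) is immediate.

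The genuine gap is arrow (II). In the theorem's diagram, (II) is the long diagonal $\ar@{=>}[lllld]$ from the box ``$\I$ is an object-special precovering ideal'' to the \emph{lower-left} box: it asserts that, when $\C$ has enough injective objects, the subfunctor $\I^\star$ has enough special injective \emph{objects} and $\I=\Ph(\I^\star)$. What you prove instead is the implication into the lower-right box (that $\I$ is a special precovering ideal and $\I^{\perp_\E}$ is an object ideal). Your argument for that statement, via Proposition \ref{5.3} applied to $\J=\langle\Ob(\I^{\perp_\E})\rangle$, is sound, but it is not the statement of (II), and the lower-left box does not follow from the lower-right one by any of the remaining arrows (the only arrow out of the lower-right box is (IV), which returns to the upper-right box). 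The missing content is the construction the paper gives: for $A\in\C$ choose an $\E$-triangle $A\to E\to C$ with $E$ injective and extension $\delta$, take an object-special $\I$-precover $K\to X\xrightarrow{i}C$ with $K\in\I^{\perp_\E}$, and apply the (ET4)-axiom to obtain an $\E$-triangle $A\to Z\to X$ realizing $i^\star\delta\in\I^\star$, sitting in a morphism of $\E$-triangles over $A\to E\to C$ with third component $i$. Then Lemma \ref{4.3} (with $K\in\I^{\perp_\E}$ and $E$ injective) gives $Z\in\I^{\perp_\E}=\I^\star\mbox{-}\inj$ by Proposition \ref{3.4}(2), and since $i\in\I=\Ph(\I^\star)$ by Corollary \ref{3.9}, this datum exhibits exactly an enough-special-injective-objects structure for $\I^\star$. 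None of this appears in your proposal, so implication (II) remains unproven.
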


\begin{proof}
(I) For any $C\in\C$, there exists an $\E$-triangle
$\xymatrix@C=0.5cm{E \ar[r] & Y \ar[r]^\varphi & C \ar@{-->}[r]^{\delta}& }$ with $\varphi$ an $\F$-phantom morphism and $E$
an $\F$-injective object. Because $\I=\Ph(\F)$ and $\F\mbox{-}\inj\subseteq\Ph(\F)^{\perp_\E}$, we have that $\varphi$ is an
object-special $\I$-precover of $A$. Thus $\I$ is an object-special precovering ideal.

(II) Since  $\I$ is an object-special precovering ideal, it is clearly an special precovering ideal, and hence
$\I=\Ph(\I^{\star})$ by Corollary \ref{3.9}. Let $A\in\C$, by assumption there exists an $\E$-triangle
$\xymatrix@C=0.5cm{A \ar[r] & E \ar[r]^e & C \ar@{-->}[r]^{\delta}& }$ with $E$ an injective object. For the object $C$, since
$\I$ is an object-special precovering ideal, there exists an $\E$-triangle
$\xymatrix@C=0.5cm{K \ar[r] & X \ar[r]^i & C \ar@{-->}[r]^{\gamma}& }$ with $i\in\I$ and $K\in \I^{\perp_\E}$.
By (ET4), we get the following commutative diagram
$$\xymatrix{&K\ar@{=}[r]\ar[d]&K\ar[d]&\\
A\ar[r]^a\ar[d]&Z\ar[r]\ar[d]&X\ar[d]^i\ar@{-->}[r]^{i^\star\delta}&\\
A \ar[r] & E \ar[r]^e \ar@{-->}[d]^{e^\star\gamma}& C\ar@{-->}[r]^{\delta}\ar@{-->}[d]^{\gamma}&\\
&&.&}$$
The middle row in the above diagram is an $\I^\star$-triangle. Moreover, since $K\in \I^{\perp_\E}$ and $E$ is an injective object,
we have $Z\in\I^{\perp_\E}$ by Lemma \ref{4.3}. By Proposition \ref{3.4}, we have $\I^{\perp_\E}=\I^\star\mbox{-}\inj$. Thus
$Z\in\I^\star\mbox{-}\inj$. Since $i\in\I=\Ph(\I^{\star})$, the above diagram shows that $\I^\star$ has enough special injective objects.

(III) Assume that $\I^\star$ has enough special injective objects. Of course, $\I^\star$ has enough injective morphisms, and then by
Theorem \ref{4.2}, $\I=\Ph(\I^\star)$ is a special precovering ideal. By assumption, for any $A\in\C$, there exists an $\E$-triangle
$\xymatrix@C=0.5cm{A \ar[r]^e & E \ar[r] & X \ar@{-->}[r]^{\delta}& }$ with $E$ an $\I^\star$-injective object, together with a morphism
of $\E$-triangles
$$\xymatrix{
A \ar[r]^e\ar@{=}[d] & E \ar[r]\ar[d] & X\ar[d]^{\varphi}\ar@{-->}[r]^{\delta}& \\
A \ar[r] & B \ar[r]^i & C \ar@{-->}[r]^{\delta'}& }
$$
with $\varphi\in\Ph(\I^\star)$. By Proposition \ref{3.12}, we have $\Ph(\I^\star)={^{\perp_\E}(\I^\star\mbox{-}\inj)}$, and hence
$\varphi\in{^{\perp_\E}(\I^\star\mbox{-}\inj)}$, which shows that $e$ is a special $\I^\star$-injective preenvelope of $A$.

By Proposition \ref{3.4}(2), $\I^\star\mbox{-}\inj=\I^{\perp_\E}$. So for any $a:A\ra A'\in \I^{\perp_\E}$, there exists $e':E\ra A'$
such that $a=e'e$. This means that each morphism in $\I^{\perp_\E}$ factors through an $\I^\star$-injective object, and therefore
$\I^{\perp_\E}$ is an object ideal.

(IV) By Proposition \ref{5.2}.

(V) It is trivial.
\end{proof}

The above theorem shows that if $(\C,\E,\s)$ is an extriangulated category with enough injective objects and projective morphisms,
then we have the following bijective correspondence.
$$(\bigstar\bigstar\bigstar) \ \
\xymatrix@C=1.5cm{
{\begin{tabular}{|c|}
  \hline
      \mbox{all object-special} \\ \mbox{precovering ideals of $\C$}\\
  \hline
\end{tabular}}\ar@<+4pt>[r]^{\!\!\!\!\!\!\!\!\!\!\!\!\!\!\!\!\!\!\!\!\!
(-)^\star}&\ar@<+4pt>[l]^{\!\!\!\!\!\!\!\!\!\!\!\!\!\!\!\!\!\!\Ph(-)}{\begin{tabular}{|c|}
  \hline
        \mbox{all additive subfunctors of $\E$ having}\\\mbox{enough special injective objects} \\
  \hline
\end{tabular}}}$$

Note that $(\bigstar\bigstar)$ follows from $(\bigstar)$ and the morphism version of the Salce's lemma.
Now, in view of $(\bigstar\bigstar\bigstar)$, it is natural to pose the following

\begin{question}
Does the Salce's lemma hold for object-special precovering ideals and object-special preenveloping ideals?
\end{question}

\vspace{0.5cm}

{\bf Acknowledgements.}
The first author was  partially supported by the University Postgraduate Research and Innovation Project
of Jiangsu Province 2016 (No. KYZZ16\_0034) and Nanjing University Innovation and Creative Program for PhD candidate (No. 2016011).
The second author was partially supported by NSFC (Grant No. 11571164) and
a Project Funded by the Priority Academic Program Development of Jiangsu Higher Education Institutions.

\end{document}